\newcommand{\lc}{$\ell$}
\theoremstyle{plain} 
\newtheorem{theorem}{Theorem}[section]
\newtheorem{proposition}[theorem]{Proposition}
\theoremstyle{definition}
\newtheorem{definition}[theorem]{Definition}
\newtheorem{remark}[theorem]{Remark}
\newtheorem{example}[theorem]{Example}
\newtheorem{examples}[theorem]{Examples}
\newcommand{\al}{\alpha}
\newcommand{\w}{\rightarrow}
\newcommand{\ra}{\rightarrow}
\newcommand{\cd}{\cdot}
\newcommand{\ttt}{\theta}
\newcommand{\noproof}{\hfil\qed}
\newcommand{\spl}{\operatorname{SplExt_{ss}}}
\newcommand{\splb}{\operatorname{SplExt_{ss}^B}}
\newcommand{\splw}{\operatorname{SplExt_{ss}^W}}
\newcommand{\splg}{\operatorname{SplExt_{ss}^G}}
\newcommand{\id}{\operatorname{id}}
\newcommand{\EAct}{\operatorname{EAct}_{\operatorname{ss}}}
\newcommand{\V}{\mathsf{V}}
\DeclareMathOperator{\Ker}{Ker}
\DeclareMathOperator{\op}{op}
\DeclareMathOperator{\Act}{Act}
\DeclareMathOperator{\SplExt}{SplExt}
\newcommand{\Hp}{\ensuremath{\mathsf{Hoops}}}
\newcommand{\BH}{\ensuremath{\mathsf{BHoops}}}
\newcommand{\WH}{\ensuremath{\mathsf{WHoops}}}
\newcommand{\GH}{\ensuremath{\mathsf{GHoops}}}
\newcommand{\PH}{\ensuremath{\mathsf{PHoops}}}
\newcommand{\BA}{\ensuremath{\mathsf{BLAlg}}}
\newcommand{\MV}{\ensuremath{\mathsf{MVAlg}}}
\newcommand{\GA}{\ensuremath{\mathsf{GAlg}}}
\newcommand{\PA}{\ensuremath{\mathsf{PAlg}}}
\newcommand{\Set}{\ensuremath{\mathsf{Set}}}
\begin{document}\sloppy

\title[On actions and split extensions in varieties of hoops]{On actions and split extensions in varieties of hoops: the case of strong section}

\author[M.~Mancini]{Manuel Mancini~\orcidlink{0000-0003-2142-6193}}
\author[G.~Metere]{Giuseppe Metere~\orcidlink{0000-0003-1839-3626}}
\author[F.~Piazza]{Federica Piazza~\orcidlink{0009-0001-1028-9659}}

\address[M.~Mancini, F.~Piazza]{Dipartimento di Matematica e Informatica, Università degli Studi di Palermo, Via Archirafi 34, 90123 Palermo, Italy.}
\email{manuel.mancini@unipa.it; federica.piazza07@unipa.it}

\address[M.~Mancini]{Institut de Recherche en Mathématique et Physique, Université catholique de Louvain, chemin du cyclotron 2 bte L7.01.02, B--1348 Louvain-la-Neuve, Belgium.}
\email{manuel.mancini@uclouvain.be}

\address[G.~Metere]{Dipartimento di Scienze per gli Alimenti, la Nutrizione e l'Ambiente, Università degli Studi di Milano Statale, Via Celoria 2, 20133 Milano, Italy.}
\email{giuseppe.metere@unimi.it}

\address[F.~Piazza]{Dipartimento di Scienze Matematiche e Informatiche, Scienze Fisiche e Scienze della Terra, Università degli Studi di Messina, Viale Ferdinando Stagno d'Alcontres 31, 98166 Messina, Italy.}
\email{federica.piazza1@studenti.unime.it}

\thanks{This work is supported by the University of Milan, University of Messina, University of Palermo and by the ``National Group for Algebraic and Geometric Structures and their Applications'' (GNSAGA -- INdAM). The first author is supported by the SDF Sustainability Decision Framework Research Project -- MISE decree of 31/12/2021 (MIMIT Dipartimento per le politiche per le imprese -- Direzione generale per gli incentivi alle imprese) -- CUP:~B79J23000530005, COR:~14019279, Lead Partner:~TD Group Italia Srl, Partner:~University of Palermo, and he is also a postdoctoral researcher of the Fonds de la Recherche Scientifique--FNRS. The second and third authors are supported by the National Recovery and Resilience Plan (NRRP), Mission 4, Component 2, Investment 1.1, Call for tender No.~1409 published on 14/09/2022 by the Italian Ministry of University and Research (MUR), funded by the European Union -- NextGenerationEU -- Project Title Quantum Models for Logic, Computation and Natural Processes (QM4NP) -- CUP:~B53D23030160001 -- Grant Assignment Decree No.~1371 adopted on 01/09/2023 by the Italian Ministry of University and Research (MUR)}

\subjclass[2020]{03B50; 03B52; 06B20; 06D35; 08C05; 18E13}
\keywords{Semi-abelian category, internal action, split extension, strong section, hoop, BL-algebra, MV-algebra, Gödel algebra}

\date{}

\begin{abstract}
The aim of this article is to investigate internal actions and split extensions in the variety of hoops. We provide a characterization of split extensions with strong section in terms of \emph{strong external actions}. Beyond the general setting of hoops, the study is extended to the subvarieties of basic hoops, Wajsberg hoops, Gödel hoops and product hoops. Within the setting of basic hoops and their bounded counterparts, BL-algebras, the double negation yields a significant example of split extension with strong section, thus motivating our approach. A connection between strong external actions of hoops and the semidirect product construction introduced by W.~Rump in the cateogory of L-algebras is established.
\end{abstract}
	
\maketitle
\section*{Introduction}\label{int}

BL-Algebras were introduced by P.~Hájek in~\cite{hajek} as the algebraic semantics of \emph{Basic Logic}, the logic of continuous $t$-norms, and they capture the common fragment of the three most relevant \emph{many-valued} logics, namely \emph{Łukasiewicz logic}, \emph{Gödel logic} and \emph{product logic}. 

A $t$-norm on the unit interval is a map $[0,1]^2 \rightarrow [0,1] \colon (x,y) \mapsto x \cdot y$ such that $([0,1], \cd, 1)$ is a commutative totally ordered monoid. There are three fundamental continuous t-norms: the Łukasiewicz $t$-norm, defined by 
\[
x \cd_{L} y = \max \{x+y-1, 0\},
\] 
the Gödel  $t$-norm, defined by
\[
x \cd_{G} y = \min \{x,y\},
\] 
and the product $t$-norm, defined by
\[
x \cd_{P} y = xy.
\] 
It is a classic result that, up to isomorphism, every continuous $t$-norm behaves locally as one of the three described above.

A $t$-norm induces naturally a \emph{residuation} (or implication) defined by 
\[
x \ra y = \sup\{z \in [0,1] \mid z \cd x \leq y\}.
\] 
The implications associated with the three fundamental continuous $t$-norms are
\[
x \ra_{L} y = \min \{1 - x + y,1\},
\]
\[
x \to_{G} y = 
\begin{cases}
1, & \text{if } x \leq y,\\
y, & \text{otherwise},
\end{cases}
\]
and
\[
x \to_{P} y \;=\;
\begin{cases}
1, & \text{if } x \leq y,\\
\frac{y}{x}, & \text{otherwise}.
\end{cases}
\]
The residuation induced by a continuous $t$-norm satisfies the following conditions:
\begin{align*}
x \ra x &= 1, \\ 
x \ra 1 &= 1, \\
1 \ra x &= x
\end{align*}
and 
\[
x \ra y = 1 \quad \text{and} \quad y \ra x = 1 \quad \Longrightarrow \quad x = y.
\]

In~\cite{hajek}, the author studied in detail the three relevant cases and provides an axiomatization for the corresponding varieties of algebras and logics. The variety $\MV$ generated by $([0,1], \cdot_{L}, \ra_{L}, \max, \min, 0,1)$ defines the class of \emph{Wajsberg algebras} or \emph{MV-algebras} (see~\cite{MV1, Wajsberg}), which form the algebraic semantics of Łukasiewicz infinite-valued logic~\cite{MV2, Luka}. Algebras in the variety $\GA$ generated by $([0,1], \cd_{G}, \ra_{G}, \max, \min, 0,1)$ are called \emph{Gödel algebras}, and they form the equivalent algebraic semantics for \emph{Gödel logic}.
The variety $\PA$ of \emph{product algebras} is generated by $([0,1], \cd_{P}, \ra_{P}, \max, \min, 0,1)$ and its associated propositional calculus is \emph{product logic}~\cite{cignolitorrens, godo}. Finally, P.~Hájek introduced the variety $\BA$ of \emph{BL-algebras}, whose associated propositional calculus is \emph{Basic Logic}. He also conjectured that $\BA$ concides with the variety generated by all algebras $([0, 1], \cd, \ra, \max, \min, 0, 1)$, where $\cd$ is a continuous $t$-norm on the unit interval. This conjecture was then proved in~\cite{cignoliestevagodo}, while a shorter and more algebraic proof can be found in~\cite{basichoop}.

From a categorical point of view, the variety $\BA$, likewise the subvarieties $\MV$, $\GA$ and $\PA$, determines an ideally exact category (see~\cite{IdeallyExact, rel}). Moreover, if $L_2=\{0,1\}$ denotes the two-element Boolean algebra, then the semi-abelian categories $(\BA \downarrow L_{2})$, $(\MV \downarrow L_{2})$, $(\GA \downarrow L_{2})$ and $(\PA \downarrow L_{2})$ are equivalent, respectively, to the varieties of basic hoops, Wajsberg hoops, Gödel hoops, and product hoops.

The algebraic structure now known as a \emph{hoop} was first introduced by B.~Bosbach in~\cite{bosbach1,bosbach2}, where it appeared under the name \emph{complementary semigroups} (\emph{komplementäre Halbgruppen}). The term hoop itself was later coined in an unpublished manuscript by J.~R.~Büchi and T.~M.~Owens~\cite{BuchiOwens}, and has since become standard in the literature on substructural logics and residuated structures.  

A hoop is a commutative, residuated, integral monoid satisfying divisibility. Indeed, hoops can be seen as a \emph{positive} counterpart of residuated lattices, and the subclass of basic hoops can be obtained as the subreducts of the class of BL-algebras by omitting the lattice operations and the constant $0$. The main subclasses of interest are basic hoops, Wajsberg hoops (the hoop subreducts of MV-algebras), Gödel hoops, and product hoops, each of which reflects the equational behavior of its corresponding algebraic variety.

Now, one key notion that can be explored in the context of semi-abelian categories~\cite{Semi-Ab} is that of internal action~\cite{IntAct}, which generalizes classical algebraic notions like group or Lie algebra actions, and provides a description of split extensions (and, therefore, of retractions) in algebraic terms, namely by means of semidirect products. Internal actions operated by the objects of a category were defined by F.~Borceux, G.~Janelidze and G.~M.~Kelly with the aim of extending the correspondence between actions and split extensions from the context of groups and Lie algebras to arbitrary semi-abelian categories. However, in some cases, such as for Orzech categories of interest~\cite{Orzech}, it is more convenient to describe internal actions in terms of \emph{external actions}, i.e., via a set of maps which satisfy a certain set of identities (see for instance~\cite{WRAAlg} and~\cite{CigoliManciniMetere}, where actions in varieties of non-associative algebras are studied).
 
The aim of this article is to investigate internal actions and split extensions in the variety of hoops, as well as in its subvarieties of basic, Wajsberg, Gödel, and product hoops. 

The paper is organized as follows. After this introduction, \Cref{sec_prel1} presents some  background on BL-algebras, hoops, and subvarieties. The notions of split extension, internal action and semidirect product in the context of semi-abelian varieties of Universal Algebra are then recalled in \Cref{sec_prel2}.
In \Cref{sec_strogsec} we focus on split extensions with \emph{strong section}, i.e., such that the corresponding split epimorphism has a strong section in the sense of W.~Rump (see~\cite{Rump1, Rump2}). Such split extensions are described in terms of \emph{strong external actions}, which are pairs of maps satisfying a set of identities closely related to the axioms satisfied by the hoop.
In \Cref{sec_exact} we prove there is a bijection between the set of strong external actions and the set of isomorphism classes of split extensions with strong section in the variety of hoops. The result is then generalized to the subvarieties of basic, Wajsberg, Gödel and product hoops. Furthermore, we prove that this bijection extends to a natural isomorphism between the functor $\EAct(-,X)$ of strong external actions on a fixed object $X$ and the functor $\spl(-,X)$ of isomorphism classes of split extensions with strong section with fixed kernel $X$. We also show that the notion of split extension with strong section trivializes in the context of MV-algebras, whereas in the variety of Gödel hoops, strong external actions coincide with those in the variety of basic hoops.
In addition, in \Cref{sec_rump} we show a connection between the notion of strong external action in the variety of hoops and the semidirect product construction introduced by W.~Rump in the category of L-algebras (see~\cite{Rump3, Rump2}).
We end the manuscript with some possible future directions.

\section{Preliminaries}\label{sec_prel1}
In this section, we present the algebraic structures that are the focus of our study. For background material and further details, we refer the reader to~\cite{hoops}. 

\begin{definition}
A \emph{commutative integral residuated lattice} (or CIRL) is an algebra $A=(A, \vee, \wedge, \cd, \ra, 1)$ of type $(2,2,2,2,0)$ such that
\begin{itemize}
\item[(1)] $(A,\cd, 1)$ is a commutative monoid.
\item[(2)] $(A, \vee, \wedge, 1)$ is a lattice with top element $1$.
\item[(3)] $(\cd, \ra)$ form a \emph{residuated pair} with respect to the lattice ordering, i.e., the following \emph{residuation property} holds:
\[
x \cd y \leq z \quad \text{if and only if} \quad x \leq y \ra z,
\]
where $x,y,z$ denote arbitrary elements of $A$ and $\leq$ is the order induced by the lattice structure.
\end{itemize}
\end{definition}

Residuated lattices, originally introduced in~\cite{res_lattice} as models of the divisibility properties of ideals in commutative rings, have recently acquired importance as the algebraic counterparts of certain fuzzy logics (see, for instance~\cite{esteva, galatos, gerla, hajek}).

We call \emph{bounded} commutative integral residuated lattice (BCIRL, for short) a CIRL with an extra constant $0$ in the signature that is the bottom element in the lattice order~\cite{projectivity, cignoli}.

\begin{definition}\cite{prod}
A \emph{BL-algebra} is an algebra $A=(A, \vee, \wedge, \cd, \ra, 0,1)$ such that
\begin{itemize}
\item[(1)] $(A,\vee,\wedge, \cd, \ra, 0, 1)$ is a BCIRL. \hfill
\item[(2)] $x \wedge y = x \cd (x \ra y)$, for any $x,y \in A$. \hfill (\emph{divisibility condition})
\item[(3)] $(x \ra y) \vee (y \ra x) = 1$, for any $x,y \in A$. \hfill (\emph{prelinearity condition})
\end{itemize}
\end{definition}

We denote by $\BA$ the variety of BL-algebras. In any BL-algebra $A$, one can define the negation of an element $x$ as $\neg x \coloneqq x \ra 0.$ 

We observe that the divisibility condition is equivalent to requiring that the order $\leq$ induced by the lattice operations satisfies the following condition: for any $x,y \in A$
\[
x \leq y \; \text{ if and only if there exists $z \in A$ such that } x= y \cd z.
\]
On the other hand, the prelinearity condition characterizes those BCIRLs generated by chains (i.e., BCIRLs whose lattice order is total). In the present article, we are interested in the following subvarieties of BL-algebras.

\begin{definition}\label{def_alg}
Let $A$ be a BL-algebra. We say that $A$ is a
\begin{itemize}
\item \emph{MV-algebra}~\cite{MV1, turunen} if it satisfies \emph{involutivity}:
\[
\neg \neg x=x,
\]
for any $x \in A$.
\item \emph{Gödel algebra}~\cite{hajek} if it satisfies \emph{idempotency}:
\[
x\cd x=x,
\]
for any $x \in A$.
\item \emph{product algebra}~\cite{hajek} if it satisfies the identity
\[
\neg x \vee ((x \ra x \cd y) \ra y)=1,
\]
for any $x,y \in A$.
\end{itemize}
\end{definition}

We denote by $\MV$, $\GA$ and $\PA$ the varieties of MV-algebras, Gödel algebras and product algebras respectively. 

\begin{remark}\cite{hajek}
BL-algebras, MV-algebras, Gödel algebras and product algebras constitute the equivalent algebraic semantics for \emph{Hájek's Basic Logic}, \emph{infinite-valued Łukasiewicz logic}, \emph{Gödel logic} and \emph{product Logic}.
\end{remark}

\begin{examples}\label{ex}{\ }
\begin{itemize}
\item[1.] The algebra
\[
[0,1]_{\operatorname{MV}} = ([0,1], \cd_L, \ra_L, \max, \min, 0,1),
\]
where $x \cd_L y = \max \{x+y-1,0\}$ and $x \ra_L y = \min \{1-x+y,1\}$, is an MV-algebra, called the \emph{standard MV-algebra}.
\item[2.] The algebra
\[
[0,1]_{G}=([0,1], \cd_{G}, \ra_{G}, \max, \min, 0,1),
\]
where $x \cd_{G}y = \min \{x,y\}$ and
\[
x\ra_{G} y = \begin{cases} 1, \ \ &\text{if} \ x \leq y, \\
y, &\text{otherwise} \end{cases}
\]
is a Gödel algebra, called the \emph{standard Gödel algebra}.
\item[3.] The algebra
\[
[0,1]_P = ([0,1], \cd_P, \ra_P, \max, \min, 0,1),
\]
where $x \cd_P y = xy$ and 
\[
x\ra_P y = \begin{cases} 1, \ \ &\text{if} \ x \leq y, \\
\frac{y}{x}, &\text{otherwise} \end{cases}
\]
is a product algebra, called the \emph{standard product algebra}.
\end{itemize}
\end{examples}

\begin{remark}
As mentioned in the introduction, the algebras of \Cref{ex} generate, respectively, the varieties $\MV$, $\GA$, and $\PA$ (see \cite{MV1, MV2, hajek}).
\end{remark}

In the variety of BL-algebras, and consequently in its subvarieties, the lattice operations of join and meet may be expressed in terms of the binary operations $\cd$ and $\ra$  as follows: 
\begin{align*}
&x \wedge y=x \cd (x \ra y),\\
&x \vee y=((x \ra y)\ra y)\wedge ((y \ra x)\ra x).
\end{align*}
As a result, BL-algebras may be equivalently described within the reduced language of hoops.

\begin{definition}
A \emph{hoop} is an algebra $H=(H, \cdot, \w, 1)$ of type $(2,2,0)$ such that
\begin{enumerate}
\item[H1.] $(H,\cdot, 1)$ is a commutative monoid;
\item[H2.] $x \w x=1$;
\item[H3.] $x \cdot (x \w y)=y \cdot (y \w x)$;
\item[H4.] $(x \cdot y)\w z= x \w (y \w z)$,
\end{enumerate}
for any $x,y,z \in H$. 

A hoop $H$ is \emph{bounded} if there is a constant $0 \in H$ such that
\begin{itemize}
\item[H5.] $0 \w x=1$,
\end{itemize}
for any $x \in H$.
\end{definition}

We denote by $\Hp$ the variety of hoops.
Every hoop $H$ is endowed with a partial order $\leq$, called the \emph{natural order}, which is defined by the following equivalent conditions: for any $x,y \in H$
\begin{enumerate}
\item[(i)] $x \leq y$;
\item[(ii)] $x \w y=1$;
\item[(iii)] there exists $z \in H$ such that $x=z \cdot y$.
\end{enumerate}

In particular, $(H, \leq)$ is a partially ordered set with top element $1$, in which every pair of elements $x,y \in H$ admit an infimum given by $x \wedge y= x \cd (x \ra y)$. If the hoop $H$ is bounded, then $0$ is its bottom element.

We further recall that a \emph{filter} of a hoop $H$ is a subset $F \subseteq H$ such that $(F, \cd, 1)$ is a submonoid of $(H, \cd, 1)$ which is upward closed with respect to the natural order~$\leq$ of $H$. Every filter $F$ of a hoop $H$ defines a congruence $\theta_{F}$ as follows:
\[
\theta_{F} \coloneqq \{(x,y)\in H \times H \mid (x \ra y)\cd (y \ra x) \in F\}.
\]
Conversely, every congruence $\theta$ on a hoop $H$ determines a filter $F_{\theta}=1/\theta$ (the equivalence class of $1$ under $\theta$). The correspondence $F \mapsto \theta_{F}$ establishes an isomorphism between the lattice of filters and the lattice of congruences of $H$, whose inverse is given by the map $\theta \mapsto F_{\theta}$. Therefore, the quotient of $H$ by a filter $F$ can be denoted by $H/F$, where $H/F \coloneqq H/\theta_{F}$. 

A hoop homomorphism $f \colon H \ra H'$ is a homomorphism of monoid which preserves the binary operation $\ra$. If $H$ and $H'$ are bounded, then a homomorphism of bounded hoops must preserve the bottom element $0$. The kernel of a homomorphism $f \colon H \ra H'$ is the set
\[
\Ker f=\{h \in H \mid f(h)=1\}
\]
and it turns out to be a filter of $H$. Conversely, every filter $F$ of a hoop $H$ may be seen as the kernel of the canonical projection map $\pi \colon H \ra H/F$, thus establishing a bijection between kernels and filters.

We now introduce some subvarieties of $\Hp$ which are stictly related to the subvarieties of $\BA$ introduced in \Cref{def_alg}.

\begin{definition}\cite{basichoop, prod} 
Let $H$ be a hoop. We say that $H$ is a
\begin{itemize} 
\item \emph{Basic hoop} if it satisfies the identity
\[
((x \ra y)\ra z)\ra (((y \ra x)\ra z)\ra z)=1,
\]
for any $x,y,z \in H$.
\item \emph{Wajsberg hoop} if it satisfies the identity 
\[
(x \ra y)\ra y=(y \ra x)\ra x,
\]
for any $x,y \in H$.
\item \emph{Gödel hoop} if it is a basic hoop satisfying
\[
x \cd x=x,
\]
for any $x \in X$.
\item \emph{Product hoop} if it is a basic hoop satisfying
\[
(y \ra z)\vee ((y \ra (x \cd y))\ra x)=1,
\]
for any $x,y,z \in H$.
\end{itemize}
\end{definition}

We denote by $\BH$, $\WH, \GH$, and $\PH$ the varieties of basic hoops, Wajsberg hoops, Gödel hoops, and product hoops respectively. We observe that, although it is not not explicitly required by the definition, every Wajsberg hoop is in fact a basic hoop~\cite{hoops}. 

The following proposition makes clear the connection between the subvarieties of hoops introduced above and the subvarieties of BL-algebras of \Cref{def_alg}.

\begin{proposition}\cite{prod}
Basic hoops, Wajsberg hoops, Gödel hoops, and product hoops are the $0$-free subreducts\footnote{Given a BCIRL $A$, a $0$-free subreduct of $A$ is a subalgebra of the $0$-free reduct of $A$~\cite{prod}.} of, respectively, BL-algebras, MV-algebras, Gödel algebras, and product algebras. \noproof
\end{proposition}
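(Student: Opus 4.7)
The plan is to establish, for each of the four pairs $(\BH,\BA)$, $(\WH,\MV)$, $(\GH,\GA)$, $(\PH,\PA)$, both inclusions: every $0$-free subreduct of an algebra in the bounded variety is a hoop of the claimed kind, and conversely every such hoop arises as a $0$-free subreduct of some algebra in the bounded variety.

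The first direction is a routine equational check. Given $A$ in $\BA$ (resp.\ in $\MV$, $\GA$ or $\PA$) and a subalgebra $H$ of its $0$-free reduct, the commutative monoid structure and the identities $x\ra x=1$, $x\cd(x\ra y)=y\cd(y\ra x)$ and $(x\cd y)\ra z=x\ra(y\ra z)$ all follow from the CIRL axioms together with divisibility, so $H$ is a hoop. The basic-hoop identity $((x\ra y)\ra z)\ra(((y\ra x)\ra z)\ra z)=1$ is then an equational consequence of prelinearity $(x\ra y)\vee(y\ra x)=1$; the Wajsberg identity $(x\ra y)\ra y=(y\ra x)\ra x$ is the standard $0$-free reformulation of involutivity $\neg\neg x=x$ in an MV-algebra; and idempotency $x\cd x=x$ and the product-hoop identity transfer verbatim from their bounded counterparts.

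For the converse I would adjoin a formal bottom element. Given a hoop $H\in\BH$, set $H_{\bot}\coloneqq H\sqcup\{\bot\}$ and extend the operations by $\bot\cd x=x\cd\bot=\bot$ and $\bot\ra x=1$ for every $x\in H_{\bot}$, together with $x\ra\bot=\bot$ for $x\in H$. Define the lattice operations globally via $x\wedge y=x\cd(x\ra y)$ and $x\vee y=((x\ra y)\ra y)\wedge((y\ra x)\ra x)$. One verifies that $H_{\bot}$ is a BL-algebra with $0=\bot$, and the inclusion $H\hookrightarrow H_{\bot}$ exhibits $H$ as a subalgebra of its $0$-free reduct. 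Because the Wajsberg, idempotency and product-hoop identities are preserved under this construction, the same recipe applied to algebras in $\WH$, $\GH$ or $\PH$ yields an MV-algebra, a Gödel algebra or a product algebra, respectively.

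The main obstacle is the axiom verification for $H_{\bot}$, in particular prelinearity and, in the Wajsberg case, involutivity. For $x,y\in H$, prelinearity $(x\ra y)\vee(y\ra x)=1$ is deduced from the defining basic-hoop identity by specialising $z$ appropriately, while the cases involving $\bot$ are immediate since $\bot$ is absorbing for $\cd$ and satisfies $\bot\ra -=1$. Involutivity $(x\ra\bot)\ra\bot=x$ for $x\in H$ follows from the Wajsberg identity applied to the pair $(x,\bot)$, together with $\bot\ra x=1$. The remaining checks for Gödel and product algebras are straightforward, completing the argument.
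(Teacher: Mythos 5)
The paper offers no proof of this proposition---it is imported from the cited reference---so your attempt can only be judged on its own merits. The forward direction is essentially fine, and the bottom-adjunction does handle the basic and Gödel cases of the converse: with $\bot$ absorbing for $\cdot$, $\bot\ra x=1$ and $x\ra\bot=\bot$ for $x\in H$, one checks that $H_\bot$ is a BL-algebra, and idempotency is preserved.

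The converse fails, however, in exactly the two cases that make the proposition nontrivial. For Wajsberg hoops your verification of involutivity is circular and in fact refutes your own construction: with your definitions $(x\ra\bot)\ra\bot=\bot\ra\bot=1$ while $(\bot\ra x)\ra x=1\ra x=x$, so the Wajsberg identity $(x\ra y)\ra y=(y\ra x)\ra x$ fails in $H_\bot$ at $y=\bot$ for every $x\neq 1$; equivalently, $\neg$ is constant on $H$ and cannot be an involution. No one-point bottom adjunction can work: the negative cone of $\mathbb{Z}$ is a cancellative Wajsberg hoop whose embedding into an MV-algebra (e.g.\ Chang's algebra) sends its elements to points whose negations are infinitesimals, not $0$. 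For product hoops the recipe also fails: the two-element idempotent hoop $\{a,1\}$ satisfies the product-hoop identity (it is a subreduct of the two-element Boolean algebra), but adjoining a bottom produces the three-element Gödel chain, which is not a product algebra since $\neg a\vee((a\ra a\cd a)\ra a)=\bot\vee a=a\neq 1$. The missing ingredient is the structure theory used in the cited literature: decompose the hoop subdirectly into chains, distinguish bounded from cancellative totally ordered Wajsberg (resp.\ product) hoops, and embed the cancellative ones into suitable MV-chains (resp.\ product chains) by a construction that adds more than a single new element.
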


It was proved in~\cite{basichoop} that bounded basic hoops are term-equivalent to the variety of BL-algebras. Similarly, bounded Wajsberg hoops, bounded Gödel hoops, and bounded product hoops are term-equivalent, respectively, to the varieties of MV-algebras, Gödel algebras, and product algebras (see~\cite{prodred, aglianò, cignolimundici, productlogic}).

\section{Semidirect products in semi-abelian varieties}\label{sec_prel2}

In this section, we aim to describe the construction of semidirect products in the variety of hoops. Indeed, the class of hoops forms a variety in the sense of Universal Algebra, which has been shown to be a semi-abelian category (in the sense of~\cite{Semi-Ab}) by S.~Lapenta, G.~Metere and L.~Spada in~\cite{rel} using the following characterisation.

\begin{theorem}\cite{caratt}
A variety $\V$ is semi-abelian if and only if it has, among its terms, a unique constant $e$, together with $n$ binary terms $\alpha_{i}$, $i=1,\ldots,n$, and an $(n+1)$-ary term $\theta$ satisfying the identities
\[
\alpha_{i}(x,x)=e, \quad \text{for any} \ i=1,\ldots,n
\]
and
\[
\theta(\underline{\alpha}(x,y),y)=x,
\]
where $\underline{\alpha}(x,y)=(\alpha_{1}(x,y),\ldots,\alpha_{n}(x,y))$. \noproof
\end{theorem}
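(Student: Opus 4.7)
The plan is to verify this Bourn–Janelidze style characterisation of semi-abelian varieties. Since any variety is automatically Barr-exact and cocomplete, the semi-abelian conditions reduce to showing that $\mathsf{V}$ is pointed and Bourn-protomodular. Pointedness is the straightforward half and is equivalent to the existence of a unique constant $e$ in the signature, which furnishes the zero object via the singleton subalgebra $\{e\}$. I would therefore focus on translating Bourn-protomodularity into the existence of the binary terms $\alpha_i$ and the $(n+1)$-ary term $\theta$ with the prescribed identities.

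For the ``only if'' direction, I would consider the generic split epimorphism over the free algebra on two generators, namely
\[
p\colon F_{\mathsf{V}}(x,y)\to F_{\mathsf{V}}(y),\qquad x\mapsto y,\ y\mapsto y,
\]
with section $s\colon y\mapsto y$ and kernel $K=\Ker p$. A term $w(x,y)$ lies in $K$ precisely when $w(y,y)=e$ in $F_{\mathsf{V}}(y)$, which is equivalent to the identity $w(x,x)=e$ holding in $\mathsf{V}$. Bourn-protomodularity applied to this split extension forces the pair $(K\hookrightarrow F_{\mathsf{V}}(x,y),\,s)$ to be jointly strongly epimorphic; hence the generator $x$ must lie in the subalgebra of $F_{\mathsf{V}}(x,y)$ generated by $K$ and $s(F_{\mathsf{V}}(y))=\langle y\rangle$. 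Since subalgebra membership in a variety is always witnessed by a single term applied to finitely many elements, one obtains $\alpha_{1}(x,y),\ldots,\alpha_{n}(x,y)\in K$ and an $(n+1)$-ary term $\theta$ with $x=\theta(\alpha_{1}(x,y),\ldots,\alpha_{n}(x,y),y)$. This yields the identity $\theta(\underline{\alpha}(x,y),y)=x$, together with $\alpha_{i}(x,x)=e$ inherited from $\alpha_{i}\in K$.

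For the converse, I would establish the split short five lemma directly from the given terms, which suffices to deduce Bourn-protomodularity in the pointed varietal setting. Given a morphism of split extensions whose restrictions to the kernels and to the base objects are isomorphisms, I would invoke the universal identity
\[
a=\theta\bigl(\alpha_{1}(a,s(p(a))),\ldots,\alpha_{n}(a,s(p(a))),\,s(p(a))\bigr),
\]
which holds for every element $a$ of the total object (because $\alpha_{i}(x,x)=e$ ensures that $\alpha_{i}(a,s(p(a)))$ lies in $\Ker p$). This formula shows that the middle morphism is both injective (its value on $a$ is determined by its values on the kernel-part $\alpha_{i}(a,s(p(a)))$ and on the section-part $s(p(a))$) and surjective (by an explicit preimage construction using the same formula on the codomain side). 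Combined with pointedness from the unique constant, this yields the semi-abelian property.

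The main obstacle is in the first direction: one must argue that a single finite-arity term $\theta$ together with a \emph{finite} list of $\alpha_{i}$'s actually suffices. This rests on the interplay between the categorical content of Bourn-protomodularity, read as the joint strong epimorphicity of the kernel--section pair, and the elementary but essential fact that in any variety, subalgebra membership is always witnessed in finitely many variables. Once this finiteness is secured, the rest of the argument amounts to careful bookkeeping of substitutions in the free algebra $F_{\mathsf{V}}(x,y)$.
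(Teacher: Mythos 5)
The paper does not prove this theorem: it is quoted from Bourn--Janelidze \cite{caratt} and marked as proof omitted, so there is no in-paper argument to compare against. Your sketch correctly reconstructs the standard argument of the cited source --- pointedness via the unique constant, the ``only if'' direction by applying joint strong epimorphicity of the kernel--section pair to the generic split epimorphism $F_{\mathsf{V}}(x,y)\to F_{\mathsf{V}}(y)$ and extracting the terms from finitary subalgebra generation, and the converse by deriving the split short five lemma from the identity $a=\theta(\underline{\alpha}(a,sp(a)),sp(a))$ --- so it is essentially the same approach, and I see no gap in it.
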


\begin{remark}\cite{rel}\label{rem_terms}
The variety $\Hp$ is semi-abelian, with $e=1$,
\[
\alpha_{1}(x,y)= x\ra y, \quad \alpha_{2}(x,y)=((x \ra y)\ra y)\ra x 
\]
and
\[
\theta(x,y,z)=(x \ra z)\cd y.
\]
Hence, its subvarieties $\BH, \WH, \GH$ and $\PH$ are semi-abelian as well.
\end{remark}

\begin{remark}
The initial object of the categories $\BA, \MV, \GA$ and $\PA$ is the two-element Boolean algebra $L_{2}=\{0,1\}$. However, these varieties above are not semi-abelian, since they are not pointed, but, as shown in~\cite{rel}, they are protomodular (see~\cite{malcev, caratt, Bourn-Janelidze:Semidirect}). As a consequence,  they are ideally exact categories in the sense of G.~Janelidze~\cite{IdeallyExact}.
\end{remark}

A notion that we need in the following is that of \emph{split extension}. 

\begin{definition}
Let $\V$ be a semi-abelian category, and let $B$ and $X$ be objects of~$\V$. A \emph{split extension} of $B$ by $X$ is a diagram of the form
\begin{equation}\label{eq_spl_ext}
\begin{tikzcd}
X & A & B
\arrow["k", from=1-1, to=1-2]
\arrow["p", shift left, from=1-2, to=1-3]
\arrow["s", shift left, from=1-3, to=1-2]
\end{tikzcd}  
\end{equation}
in $\V$ such that $p \circ s=\id_{B}$ and $(X,k)$ is a kernel of $p$.
\end{definition}
We recall that in any pointed category $\V$, a \emph{kernel} of a morphism $p \colon A \rightarrow B$ is defined as the pullback of $p$ along the unique morphism $0 \rightarrow B$, where $0$ denotes the zero object of $\V$. If $\V$ is a pointed variety, with $0=\{e\}$, then a kernel of a morphism $p \colon A \rightarrow B$ is the subalgebra $p^{-1}(e)$, together with the canonical inclusion. Dually, a \emph{cokernel} of $p$ is defined as the pushout of $p$ along the unique morphism $A \rightarrow 0$.

Notice that protomodularity implies that the pair $(k,s)$ is jointly strongly epic and $p$ is a cokernel of $k$. We recall from~\cite{acc} that a pair of morphisms
\[
\begin{tikzcd}
X & Z & Y
\arrow["f", from=1-1, to=1-2]
\arrow["g", swap, from=1-3, to=1-2]
\end{tikzcd}  
\]
in an arbitrary category~$\V$ is said to be \emph{jointly strongly epic}, or \emph{jointly strongly epimorphic}, when for each commutative diagram:
\[
\begin{tikzcd}
& M & \\
& P \\
X & Z & Y
\arrow["m"', from=1-2, to=2-2]
\arrow["{f'}", curve={height=-12pt}, from=3-1, to=1-2]
\arrow["f"', from=3-1, to=3-2]
\arrow["\psi"', curve={height=12pt}, dashed, from=3-2, to=1-2]
\arrow["\phi", from=3-2, to=2-2]
\arrow["{g'}"', curve={height=12pt}, from=3-3, to=1-2]
\arrow["g", from=3-3, to=3-2]
\end{tikzcd}\]
if $m$ is a monomorphism, then there exists a unique (dashed) morphism $\psi \colon Z \rightarrow M$ such that $m \circ \psi =\phi$. 

A morphism of split extensions from
\begin{equation*}
\begin{tikzcd} 
X' \arrow [r, "k'"] 
&A' \arrow[r, shift left, "p'"] & 
B'\ar[l, shift left, "s'"]
\end{tikzcd}
\end{equation*}
to
\begin{equation*}
\begin{tikzcd} 
X \arrow [r, "k"]
&A \arrow[r, shift left, "p"] & 
B \ar[l, shift left, "s"] 
\end{tikzcd}
\end{equation*}
consists of a triple of morphisms in $\V$ 
\[
(g \colon X' \rightarrow X, \, h \colon A' \rightarrow A, \, f \colon B' \rightarrow B)
\]
such that the following diagram commutes
\begin{equation*}
\begin{tikzcd} 
X' \arrow [r, "k'"] \ar[d, swap, "g"]
&A' \arrow[r, shift left, "p"] \ar[d, swap, "h"]& 
B' \ar[l, shift left, "s'"]\ar[d, "f"]\\
X \arrow [r, "k"]
&A \arrow[r, shift left, "p"] & 
B,\ar[l, shift left, "s"]
\end{tikzcd}
\end{equation*}
i.e., $h \circ k'=k \circ g$, $h \circ s'=s \circ f$ and $p \circ h = f \circ p'$. Let us observe that, again by protomodularity, a morphism of split extensions fixing $X$ and $B$ is necessarily an isomorphism. For  an object $X$ of $\V$, we define the functor
\[
\SplExt(-,X)\colon \V^{\operatorname{op}} \rightarrow \Set
\]
which assigns to any object $B$ of $\V$, the set $\operatorname{SplExt}(B,X)$ of isomorphism classes of split extensions of $B$ by $X$, and to any arrow $f\colon B'\to B$ the \emph{change of base} function  $f^*\colon \operatorname{SplExt}(B,X) \to \operatorname{SplExt}(B',X)$ given by pulling back along $f$. Here, \emph{pulling back along $f$} means that, given a split extension \eqref{eq_spl_ext}, one forms the pullback of $p$ along $f\colon B' \rightarrow B$, obtaining a split extension
\[
\begin{tikzcd}
X & {A\times_BB'} & {B'} \\
X & A & B
\arrow["{k'}", from=1-1, to=1-2]
\arrow[equals, from=1-1, to=2-1]
\arrow["{p_2}", shift left, from=1-2, to=1-3]
\arrow["p_1"', from=1-2, to=2-2]
\arrow["{s'}", shift left, from=1-3, to=1-2]
\arrow["f", from=1-3, to=2-3]
\arrow["k"', from=2-1, to=2-2]
\arrow["p", shift left, from=2-2, to=2-3]
\arrow["s", shift left, from=2-3, to=2-2]
\end{tikzcd}
\]
where $p_1$ and $p_2$ are the pullback projections, and $s'=\langle s \circ f, \id_{B'} \rangle$ is the unique section of $p_2$ obtained by the universal property of the pullback. Notice that $p_2$ has the same kernel as $p$, with $k'=\langle k,0_{X,B'}\rangle$, where $0_{X,B'} \colon X \to 0\to B'$ denotes the \emph{zero map}.

A feature of semi-abelian categories is that one can define a notion of internal action~\cite{IntAct}. If we fix an object $X$, actions on $X$ give rise to a functor 
\[
\Act(-,X)\colon \V^{\operatorname{op}} \rightarrow \Set.
\]
We will not describe explicitly internal actions here, since they correspond to split extensions via a semidirect product construction (more in detail, there is a natural isomorphism of functors $\operatorname{Act}(-,X)\cong \operatorname{SplExt}(-,X)$, see~\cite{BJK2}), and split extensions are handier to work with. Our aim now is to remind such a construction for semi-abelian varieties of Universal Algebra~\cite{semidir}.

Let $\V$ be a semi-abelian variety and let
\[
\begin{tikzcd}
A & B
\arrow["p", shift left, from=1-1, to=1-2]
\arrow["s", shift left, from=1-2, to=1-1]
\end{tikzcd}
\]
be a split epimorphism in $\V$ such that $X=\Ker{p}$. One can define two set maps
\begin{align*}
&\varphi \colon X^{n}\times B \rightarrow A \colon (\underline{x},b)\mapsto \theta(\underline{x},s(b)), \\
&\psi \colon A \rightarrow X^{n}\times B \colon
a\mapsto (\underline{\alpha}(a, sp(a)), p(a))
\end{align*}
and it is easy to check that $\varphi \circ \psi = \id_{A}$ and $A$ is in bijection with the subset
\[
Y=\{(\underline{x},b)\in X^{n}\times B \mid \underline{\alpha}(\theta(\underline{x}, s(b)),s(b))=\underline{x}\}
\]
of $X^{n}\times B$, where $\underline{\alpha}$ and $\theta$ are as in \Cref{rem_terms}.

\begin{theorem}\cite{semidir}
Given a semi-abelian variety $\V$ and a split epimorphism
\[
\begin{tikzcd}
A & B
\arrow["p", shift left, from=1-1, to=1-2]
\arrow["s", shift left, from=1-2, to=1-1]
\end{tikzcd}\] in $\V$, let $X=\Ker p$ and let $\xi \colon B \flat X \rightarrow X$ be the corresponding internal action of $B$ on $X$. Then the semidirect product $X \rtimes_{\xi} B$ of $X$ and $B$ with respect to $\xi$ is the set
\[
Y=\{(\underline{x},b)\in X^{n}\times B \mid \underline{\alpha}(\theta(\underline{x}, s(b)),s(b))=\underline{x}\}
\]
endowed with the following structure: if $\omega$ is an $m$-ary term of the variety, then in~$Y$ we have 
\begin{align*}\omega_{Y}((\underline{x}_{1},b_{1}),\ldots,&(\underline{x}_{m},b_{m}))\\
=&(\underline{\alpha}_{A}(\omega_{A}(\theta_{A}(\underline{x}_{1},s(b_{1})),\ldots,\theta_{A}(\underline{x}_{m},s(b_{m}))),\omega_{A}(s^{m}(\underline{b}))),\omega_{B}(\underline{b})). \noproof
\end{align*}
\end{theorem}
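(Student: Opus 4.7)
The plan is to transport the algebra structure of $A$ along an explicit set-theoretic bijection between $A$ and the subset $Y\subseteq X^n\times B$, and then to unwind the definitions of $\varphi$ and $\psi$ to obtain the formula for an arbitrary operation $\omega$.

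First, I would verify carefully that $\varphi$ and $\psi$ restrict to mutually inverse bijections between $A$ and $Y$. That $\underline{\alpha}(a,sp(a))\in X^n=(\Ker p)^n$ follows from the identity $\alpha_i(x,x)=e$ together with the fact that $p$ is a homomorphism, giving $p(\alpha_i(a,sp(a)))=\alpha_i(p(a),p(a))=e$. Applying the identity $\theta(\underline{\alpha}(x,y),y)=x$ with $x=a$ and $y=sp(a)$ yields $\varphi\circ\psi=\id_A$ and, in particular, shows that $\psi(a)$ satisfies the defining relation of $Y$. Conversely, for $(\underline{x},b)\in Y$, specializing $\theta(\underline{\alpha}(x,y),y)=x$ to $x=y$ gives $\theta(\underline{e},y)=y$; hence $p(\varphi(\underline{x},b))=\theta(p(\underline{x}),ps(b))=\theta(\underline{e},b)=b$, so $sp(\varphi(\underline{x},b))=s(b)$, and the defining relation of $Y$ then forces $\psi(\varphi(\underline{x},b))=(\underline{x},b)$.

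Next, I would declare the operations on $Y$ to be those induced by this bijection,
\[
\omega_Y(\underline{y}_1,\ldots,\underline{y}_m)=\psi\bigl(\omega_A(\varphi(\underline{y}_1),\ldots,\varphi(\underline{y}_m))\bigr),
\]
with $\underline{y}_i=(\underline{x}_i,b_i)$, and then compute both coordinates explicitly. The second coordinate is immediate: since $p$ is a homomorphism and $p\circ\varphi(\underline{x}_i,b_i)=b_i$, it equals $\omega_B(\underline{b})$. For the first coordinate, using that $s$ is a homomorphism one obtains
\[
sp\bigl(\omega_A(\varphi(\underline{y}_1),\ldots,\varphi(\underline{y}_m))\bigr)=s(\omega_B(\underline{b}))=\omega_A(s^m(\underline{b})),
\]
so substituting $\varphi(\underline{y}_i)=\theta_A(\underline{x}_i,s(b_i))$ into $\underline{\alpha}_A(\cdot,sp(\cdot))$ produces exactly the expression in the statement.

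Finally, to identify the algebra $(Y,\omega_Y)$ with the semidirect product $X\rtimes_\xi B$, I would invoke the natural isomorphism $\Act(-,X)\cong\SplExt(-,X)$ recalled earlier: by definition, the semidirect product associated with the internal action $\xi$ corresponding to the given split extension is the middle object $A$, which is isomorphic as an algebra to $Y$ by construction. The main obstacle, in my view, is the bijection step: one has to invoke the identities $\theta(\underline{\alpha}(x,y),y)=x$ and $\alpha_i(x,x)=e$ in the correct order, and in particular verify that the image of $\psi$ truly lands in $Y$ rather than merely in $X^n\times B$.
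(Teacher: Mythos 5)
Your argument is correct and follows exactly the route the paper intends: the theorem is quoted from the cited reference without proof, but the maps $\varphi$ and $\psi$ and the claim that they give a bijection between $A$ and $Y$ are set up immediately beforehand, and your verification (using $\alpha_i(x,x)=e$ to land in $X^n$, $\theta(\underline{\alpha}(x,y),y)=x$ for $\varphi\circ\psi=\id_A$, and $\theta(\underline e,y)=y$ plus the defining relation of $Y$ for the converse) fills in precisely those ``easy to check'' steps before transporting the structure and invoking $\Act(-,X)\cong\SplExt(-,X)$. No gaps.
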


\subsection{Semidirect products in the variety of hoops}
Since the variety $\Hp$ is semi-abelian, we can specialize the results of the previous section to this context. Let
\[
\begin{tikzcd}
A & B
\arrow["p", shift left, from=1-1, to=1-2]
\arrow["s", shift left, from=1-2, to=1-1]
\end{tikzcd}
\]
be a split epimorphism in $\mathsf{Hoops}$ and let $X=\Ker p$. One can define two set maps
\begin{align*}
&\varphi \colon X^{2}\times B \rightarrow A \colon
(x,x',b)\mapsto ((x\rightarrow s(b))\cdot x'), \\
&\psi \colon A\rightarrow X^{2}\times B \colon
a \mapsto ( a\rightarrow sp(a), ((a \ra sp(a))\ra sp(a))\ra a, p(a))
\end{align*}
and, again, one has $\varphi \circ \psi =\id_{A}$ and $A$ is in bijection with the set
\begin{align*}  
Y=\{(x,x',b)\in X^{2}&\times B \mid ((x\rightarrow s(b))\cdot x')\rightarrow s(b)=x, \\ & ((((x \ra s(b))\cd x')\ra s(b))\ra s(b))\ra ((x \ra s(b))\cd x')\}.
\end{align*}
Indeed, we have
\begin{align*}\alpha_{1}(\theta(x,x',s(b)),s(b))&=\alpha_{1}(((x \ra s(b))\cd x'), s(b))\\
&=((x \ra s(b))\cd x')\ra s(b)
\end{align*}
and
\begin{align*}
\al_{2}(\ttt(x,x',s(b)),s(b))&=\al_{2}(((x \ra s(b))\cd x'),s(b))\\
&=((((x \ra s(b))\cd x' )\ra s(b))\ra s(b))\ra ((x \ra s(b))\cd x').
\end{align*}

Thus, we can state the following.
\begin{theorem}
Let
\[
\begin{tikzcd}
A & B
\arrow["p", shift left, from=1-1, to=1-2]
\arrow["s", shift left, from=1-2, to=1-1]
\end{tikzcd}\]
be a split epimorphism in $\mathsf{Hoops}$, let $X=\Ker p$ and let $\xi \colon B \flat X\rightarrow X$ the corresponding internal action of $B$ on $X$. Then the semidirect product $X \rtimes_{\xi} B$ is the set
\begin{align*}  
Y=\{(x,x',b)\in X^{2}&\times B \mid ((x\rightarrow s(b))\cdot x')\rightarrow s(b)=x, \\ & ((((x \ra s(b))\cd x')\ra s(b))\ra s(b))\ra ((x \ra s(b))\cd x')=x'\}.\end{align*}
endowed with the following operations
\begin{align*}
(x,x'&,b)\rightarrow (y,y',b')=((((x\rightarrow s(b))\cdot x')\rightarrow ((y\rightarrow s(b'))\cdot y'))\rightarrow (s(b\rightarrow b')), \\ &((((((x\ra s(b))\cd x')\ra ((y\ra s(b'))\cd y'))\ra s(b \ra b'))\\ &\ra s(b \ra b'))\ra (((x\ra s(b))\cd x')\ra ((y\ra s(b'))\cd y'))),\ b\rightarrow b'),\end{align*}
\begin{align*}
(x,x'&,b)\cdot (y,y',b')=((((x\rightarrow s(b))\cdot x')\cd ((y\rightarrow s(b'))\cdot y'))\rightarrow (s(b\cd b')), \\ &((((((x\ra s(b))\cd x')\cd ((y\ra s(b'))\cd y'))\ra s(b \cd b'))\\ &\ra s(b \cd b'))\ra (((x\ra s(b))\cd x')\cd ((y\ra s(b'))\cd y'))),\ b\cd b')\end{align*}
and unit $1_{X \rtimes_{\xi} B}=(1,1,1)$. \noproof
\end{theorem}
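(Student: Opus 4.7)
The approach is to specialize the general semidirect product construction in semi-abelian varieties from \cite{semidir} (recalled just above) to the variety $\Hp$, using the protomodularity terms $\al_{1}(x,y)=x\ra y$, $\al_{2}(x,y)=((x\ra y)\ra y)\ra x$, and $\ttt(x,x',y)=(x\ra y)\cd x'$. Since the description of the underlying set $Y$ as the image of $\psi$ has already been carried out in the paragraph preceding the statement (via the two explicit computations of $\al_{i}(\ttt(x,x',s(b)),s(b))$), what remains to be proved is that the operations displayed in the theorem coincide with those induced on $Y$ by the general semidirect product formula for $\omega\in\{\cd,\ra\}$.

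Concretely, I would instantiate the formula
\[
\omega_{Y}((\ua_{1},b_{1}),(\ua_{2},b_{2}))=(\ua_{A}(\omega_{A}(\ttt_{A}(\ua_{1},s(b_{1})),\ttt_{A}(\ua_{2},s(b_{2}))),\omega_{A}(s(b_{1}),s(b_{2}))),\omega_{B}(b_{1},b_{2}))
\]
with $\ua_{1}=(x,x')$, $\ua_{2}=(y,y')$, and each of the two binary operations of hoops. The key simplification is that $s\colon B\to A$ is a hoop homomorphism, so
\[
s(b)\cd s(b')=s(b\cd b'),\qquad s(b)\ra s(b')=s(b\ra b'),
\]
which lets us rewrite $\omega_{A}(s(b),s(b'))$ as $s(\omega_{B}(b,b'))$ and match the nested expressions appearing in the statement. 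Plugging in the terms
\[
\ttt_{A}((x,x'),s(b))=(x\ra s(b))\cd x',\qquad \ttt_{A}((y,y'),s(b'))=(y\ra s(b'))\cd y',
\]
and then evaluating $\al_{1}$ and $\al_{2}$ componentwise on the pair
\[
\bigl(\omega_{A}((x\ra s(b))\cd x',(y\ra s(b'))\cd y'),\,s(\omega_{B}(b,b'))\bigr)
\]
yields precisely the first and second components of $(x,x',b)\,\omega\,(y,y',b')$ displayed in the theorem, with $\omega_{B}(b,b')$ in the last slot.

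For the unit, the general theorem gives $1_{X\rtimes_{\xi}B}=(\ua_{A}(1_{A},1_{A}),1_{B})$, and evaluating $\al_{1}(1,1)=1\ra 1=1$ and $\al_{2}(1,1)=((1\ra 1)\ra 1)\ra 1=1$ shows that the unit is $(1,1,1)$, as stated. The only nontrivial point to check, and arguably the main obstacle, is the careful symbolic bookkeeping that ensures the nested $\al_{2}$-expressions in the statement match the output of the general formula; however, this is a routine symbolic expansion given that $s$ is a hoop homomorphism and that the protomodularity identity $\ttt(\ua(x,y),y)=x$ holds in $\Hp$. No further algebraic identity is needed beyond those used in \Cref{sec_prel2}, so the proof reduces to this direct verification. \noproof
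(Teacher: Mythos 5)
Your proposal is correct and follows exactly the route the paper intends: the paper states this theorem without proof precisely because it is the direct instantiation of the general semidirect product formula from \cite{semidir} with the terms $\al_1$, $\al_2$, $\ttt$ of $\Hp$, the underlying set $Y$ having already been computed in the paragraph preceding the statement. Your verification of the two operations (using that $s$ is a hoop homomorphism to rewrite $\omega_A(s(b),s(b'))$ as $s(\omega_B(b,b'))$) and of the unit is the routine expansion the authors omit.
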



\section{Split extensions with strong section}\label{sec_strogsec}
The equations that define the semidirect product and the operations in the variety $\Hp$ are easier to handle when the split extension has strong section in the sense of W.~Rump (see~\cite{Rump1,Rump2}).

\begin{definition}
A split extension 
\[
\begin{tikzcd}
X & A & B
\arrow["k", from=1-1, to=1-2]
\arrow["p", shift left, from=1-2, to=1-3]
\arrow["s", shift left, from=1-3, to=1-2]
\end{tikzcd}\]
in the variety $\Hp$ has \emph{strong section} if the morphism $s$ is a strong section of $p$, i.e., if the equation 
\begin{equation}\label{ss}
a \ra s(b)=sp(a)\ra s(b)
\end{equation} holds for any $a \in A$ and $b \in B$.
\end{definition}

\begin{remark}
If $a \in X$, then \Cref{ss} can be written as
\[
a \ra s(b)=s(b),
\]
since $(X,k)$ is a kernel of $p$. Here, we identify $a$ with $k(a)$.
\end{remark}

\begin{example}\label{ex_direct}
Any direct product gives rise to a split extension with strong section. Let $X$ and $B$ be hoops and consider their direct product $A=X \times B$, 
endowed with the componentwise operations
\begin{align*}
(x,b) \cd (y,b')&= (x \cd y, b \cd b'),\\
(x,b) \ra (y,b')&=(x \ra y, b \ra b')    
\end{align*}
and unit $(1,1)$. Then, it is defined a split extension
\[
\begin{tikzcd}
X & {X \times B} & B
\arrow["\iota_1", from=1-1, to=1-2]
\arrow["\pi_2", shift left, from=1-2, to=1-3]
\arrow["\iota_2", shift left, from=1-3, to=1-2]
\end{tikzcd}
\]
where $\iota_1(x)=(x,1)$, $\iota_2(b)=(1,b)$ and $\pi_2(x,b)=b$, and one may check that $\iota_2$ is a strong section of $p_2$. Indeed, for any $(x,b) \in X \times B$ and $b' \in B$, one has
\begin{align*}
(x,b) \ra \iota_2(b')
&= (x,b) \ra (1,b') \\
&= (x \ra 1,\, b \ra b') \\
&= (1,\, b \ra b') \\
&= (1,b) \ra (1,b') \\
&= \iota_2\pi_2(x,b) \ra \iota_2(b').
\end{align*}
\end{example}

We observe that the property of having a strong section is invariant under isomorphism of split extensions. Thus, for any hoop $X$, we can define the functor
\[
\operatorname{SplExt}_{\operatorname{ss}}(-,X) \colon \Hp^{\operatorname{op}} \ra \Set
\]
which assigns to any hoop $B$, the set $\SplExt_{\operatorname{ss}}(B,X)$ of isomorphism classes of split extensions with strong section of $B$ by $X$ in $\Hp$, and to every hoop morphism $f \colon B' \ra B$ the change of base function $f^{*} \colon \SplExt_{\operatorname{ss}}(B,X)\ra \SplExt_{\operatorname{ss}}(B',X)$ given by pulling back along $f$.

\begin{proposition}\label{prop_ss}
If the split extension 
\[
\begin{tikzcd}
X & A & B
\arrow["k", from=1-1, to=1-2]
\arrow["p", shift left, from=1-2, to=1-3]
\arrow["s", shift left, from=1-3, to=1-2]
\end{tikzcd}
\]
has strong section, then $A$ is in bijection with the set
\[
Y=\{(x,x',b)\in X^{2}\times B \mid x=1 \emph{ and } s(b)\ra (s(b)\cd x')=x'\}.
\]
\end{proposition}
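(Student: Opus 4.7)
The plan is to specialize the general bijection $A\cong Y_{\text{gen}}$ established in the previous theorem, where
\[
\varphi(x,x',b) = (x \ra s(b))\cd x', \qquad \psi(a) = \bigl(a \ra sp(a),\ ((a \ra sp(a))\ra sp(a))\ra a,\ p(a)\bigr),
\]
by showing that, under the strong section hypothesis, the two equations defining $Y_{\text{gen}}$ collapse to the two simpler ones defining $Y$.

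For the forward direction, I would take $(x,x',b)\in Y_{\text{gen}}$ and set $a = \varphi(x,x',b) = (x\ra s(b))\cd x'$. A direct computation of $p$ gives $p(a) = (p(x)\ra p(s(b)))\cd p(x') = (1\ra b)\cd 1 = b$, so $sp(a) = s(b)$. Applying the strong section identity yields $a\ra s(b) = sp(a)\ra s(b) = 1$, and the first defining equation of $Y_{\text{gen}}$ thus forces $x = 1$. Substituting $x=1$ into the second equation and using $(s(b)\cd x')\ra s(b) = 1$ reduces it to $(1\ra s(b))\ra(s(b)\cd x') = s(b)\ra(s(b)\cd x') = x'$, which is precisely the condition defining $Y$.

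For the converse, I would reverse the reductions: given $(1,x',b)\in Y$, since $x'\leq 1$ in the hoop $X$ one has $s(b)\cd x'\leq s(b)$ and hence $(s(b)\cd x')\ra s(b) = 1$, so the first equation of $Y_{\text{gen}}$ is automatic, and the second then collapses exactly to $s(b)\ra(s(b)\cd x') = x'$, which is the defining condition of $Y$. This proves $Y = Y_{\text{gen}}$ under the strong section hypothesis, and the bijection $A\cong Y$ is inherited directly from the previous theorem via the restrictions of $\varphi$ and $\psi$.

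The only potential obstacle is the careful bookkeeping with the hoop identities $1\ra x = x$, $x\ra x = 1$, and $x\cd y \leq x$ (since $y\leq 1$), combined with the strong section equation; but no individual step is substantive beyond these manipulations, since the heavy lifting has already been done in the statement of the general bijection recalled above.
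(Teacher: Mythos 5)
Your proposal is correct and follows essentially the same route as the paper: both arguments reduce the two defining equations of the general set $Y_{\mathrm{gen}}$ using the strong section identity, the only cosmetic difference being that the paper applies the kernel form $x \ra s(b) = s(b)$ directly to $x \in X$, whereas you apply the identity to $a = (x \ra s(b))\cd x'$ after computing $p(a) = b$. Both yield $Y_{\mathrm{gen}} = Y$ and the bijection is then inherited from the general semidirect product theorem, exactly as in the paper.
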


\begin{proof}
One has
\[
((x\ra s(b))\cd x')\ra s(b)=(s(b)\cd x')\ra s(b)=1
\]
and
\begin{align*}
((((x \ra s(b))\cd x' )&\ra s(b))\ra s(b))\ra ((x \ra s(b))\cd x')\\
&=(((s(b)\cd x')\ra s(b))\ra s(b))\ra (s(b)\cd x')\\
&=((x'\ra (s(b)\ra s(b)))	\ra s(b))\ra (s(b)\cd x')\\&=(1 \ra s(b))\ra (s(b)\cd x')\\
&=s(b)\ra (s(b)\cd x')
\end{align*}
for any $x,x' \in X$ and $b \in B$.
\end{proof}

\begin{theorem}\label{thss}
Let
\[
\begin{tikzcd}
X & A & B
\arrow["k", from=1-1, to=1-2]
\arrow["p", shift left, from=1-2, to=1-3]
\arrow["s", shift left, from=1-3, to=1-2]
\end{tikzcd}
\] be a split extension with strong section in $\Hp$ and let $\xi \colon B \flat X \ra X$ be the corresponding internal action of $B$ on $X$. Then the semidirect product $X \rtimes_{\xi} B$ is given by the set
\[
Y'=\{(x,b)\in X \times B \mid s(b)\ra(s(b)\cdot x)=x\}
\]
endowed with the operations
\begin{align*}
(x,b)\ra (y,b')&=(s(b'\ra b)\ra (x \ra y),\ b \ra b'),\\
(x,b)\cdot (y,b')&=(s(b\cdot b') \ra (s(b\cdot b') \cdot x \cdot y),\ b \cdot b')
\end{align*}
and unit $1_{X \rtimes_{\xi} B}=(1,1)$.
\end{theorem}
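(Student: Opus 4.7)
The strategy is to identify the underlying set of the semidirect product with $Y'$ via \Cref{prop_ss}, and then transport the hoop operations from $A$ to $Y'$ along this identification.

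First, I would exploit \Cref{prop_ss}, which already shows that $A$ is in bijection with the set $Y$ of triples $(1,x,b)$ satisfying $s(b)\ra(s(b)\cd x)=x$; dropping the constant first coordinate gives a bijection $A\cong Y'$. Explicitly, unpacking $\varphi$ and $\psi$ under the strong section hypothesis yields $a\mapsto(sp(a)\ra a,\,p(a))$ with inverse $(x,b)\mapsto s(b)\cd x$. The key preliminary observation is that strong section implies $a\leq sp(a)$: indeed, setting $b=p(a)$ in \Cref{ss} gives $a\ra sp(a)=sp(a)\ra sp(a)=1$. Divisibility then gives $sp(a)\cd(sp(a)\ra a)=sp(a)\wedge a=a$, which verifies $\varphi\circ\psi=\id_A$; the reverse identity is a direct consequence of the $Y'$ condition.

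Next, I transport the operations. For the product, commutativity and the fact that $s$ is a hoop morphism give
\[
(s(b)\cd x)\cd_A(s(b')\cd y)=s(b\cd b')\cd x\cd y,
\]
and applying the bijection produces the stated formula immediately. The unit of $A$ is $1=s(1)\cd 1$, which corresponds to $(1,1)\in Y'$.

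The nontrivial part is the implication formula. Applying the bijection to $(s(b)\cd x)\ra_A (s(b')\cd y)$ gives second coordinate $b\ra b'$ and first coordinate $s(b\ra b')\ra\bigl((s(b)\cd x)\ra(s(b')\cd y)\bigr)$. Repeated use of the residuation identity $(u\cd v)\ra w=u\ra(v\ra w)$ together with the divisibility identity $b\cd(b\ra b')=b'\cd(b'\ra b)$ in $B$ (and hence $s(b)\cd s(b\ra b')=s(b')\cd s(b'\ra b)$ in $A$) rewrites this first coordinate as
\[
\bigl(s(b'\ra b)\cd x\bigr)\ra\bigl(s(b')\ra (s(b')\cd y)\bigr).
\]
Since $(y,b')\in Y'$, the inner expression $s(b')\ra(s(b')\cd y)$ collapses to $y$, and one final application of residuation yields the claimed formula $s(b'\ra b)\ra(x\ra y)$. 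Well-definedness of the result (membership in $Y'$) is automatic from the construction via the bijection.

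The main obstacle is this $\ra$ computation: the defining equation of $Y'$ for $(y,b')$ is exactly what is needed to cancel a residual $s(b')$ factor produced during the residuation shuffle, after the divisibility trick has converted $s(b\wedge b')$ into $s(b')\cd s(b'\ra b)$. The product formula, the unit, and the set-theoretic bijection are routine once the strong section inequality $a\leq sp(a)$ is in hand.
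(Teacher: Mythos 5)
Your proposal is correct and follows essentially the same route as the paper: identify the underlying set via \Cref{prop_ss}, then compute the transported operations, with the implication handled by exactly the same residuation/divisibility shuffle ($s(b)\cd s(b\ra b')=s(b')\cd s(b'\ra b)$) and the final collapse $s(b')\ra(s(b')\cd y)=y$ from membership in $Y'$. The only cosmetic difference is that you phrase the computation as transport along the bijection $a\mapsto(sp(a)\ra a,\,p(a))$ rather than via the general term $\al_{2}(\omega_A(\ldots),\omega_A(s^m(\underline{b})))$, but under the strong-section hypothesis these coincide.
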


\begin{proof}
It follows from \Cref{prop_ss} that the underlying set of $X \rtimes_{\xi} B$ is 
\[
\{(x,b)\in X \times B \mid s(b)\ra(s(b)\cdot x)=x\}.
\]
Moreover, one has
\begin{align*}
(x,b)\ra (y,b')&=(\al_{2}(\ttt(1,x,s(b))\ra \ttt(1,y,s(b')),s(b)\ra s(b')), \ b \ra b')\\
&=(\al_{2}((x \cd s(b))\ra (y \cd s(b')), s(b \ra b')), \ b \ra b')\\
&=(((((x \cd s(b))\ra (y \cd s(b')))\ra s(b \ra b'))\\&\ra s(b \ra b'))\ra (((x \cd s(b))\ra ( y \cd s(b')))), \ b \ra b')\\
&=(s(b \ra b')\ra ((x \cd s(b))\ra (y \cd s(b'))), \ b \ra b')\\
&\overset{\text{H4}}{=}((s(b \ra b')\cd (x \cd s(b)))\ra ( y \cd s(b')), \ b \ra b')\\
&=((s(b \cd (b \ra b'))\cd x)\ra (y \cd s(b')), \ b \ra b')\\
&\overset{\text{H3}}{=}((s(b' \cd (b' \ra b))\cd x)\ra (y \cd s(b')), \ b \ra b')\\
&=((s(b' \ra b)\cd s(b')\cd x)\ra (y \cd s(b')), \ b \ra b')\\
&\overset{\text{H4}}{=}(s(b'\ra b)\ra ((s(b')\cd x)\ra (y \cd s(b'))), \ b \ra b')\\
&\overset{\text{H4}}{=}(s(b' \ra b)\ra (x \ra (s(b')\ra (y \cd s(b')))), \ b \ra b')\\
&=(s(b' \ra b)\ra (x \ra y), \ b \ra b')
\end{align*}
where the labels over the equality signs point to the used axioms. In the above computations, we used the identity
\begin{align*}
((x \cd s(b))\ra (y \cd s(b')))\ra s(b \ra b')&=sp((x \cd s(b))\ra (y \cd s(b')))\ra s(b \ra b')\\
&=((1 \cd s(b))\ra (1 \cd s(b')))\ra s(b \ra b')\\
&=(s(b)\ra s(b'))\ra s(b\ra b')=1
\end{align*}
and the fact that $s(b)\ra (s(b)\cd x)=x$, for any $(x,b)\in X\rtimes_{\xi}B$. Furthermore,
\begin{align*}
(x,b)\cd (y,b')&=(\al_{2}((x \cd s(b))\cd (y \cd s(b'),s(b\cd b')), b\cd b')\\
&=((((x\cd y \cd s(b\cd b'))\ra s(b\cd b'))\ra s(b\cd b'))\ra (x\cd y \cd s(b\cd b')))\\
&=(s(b\cd b')\ra (x\cd y \cd s(b\cd b'))),
\end{align*}
since
\begin{align*}
(x \cd y \cd s(b\cd b'))\ra s(b\cd b')&=sp(x \cd y \cd s(b \cd b'))\ra s(b \cd b')\\
&=(1 \cd 1 \cd s(b \cd b'))\ra s(b \cd b')\\
&=s (b \cd b')\ra s(b\cd b')=1
\end{align*}
for any $x,y \in X$ and $b,b' \in B$.
\end{proof}

The following proposition makes clear how to describe the lattice operations of the semidirect product $X\rtimes_{\xi} B$ in the variety of basic hoops.

\begin{proposition}\label{lattice_strongsect}
Let
\[
\begin{tikzcd}
X & A & B
\arrow["k", from=1-1, to=1-2]
\arrow["p", shift left, from=1-2, to=1-3]
\arrow["s", shift left, from=1-3, to=1-2]
\end{tikzcd}
\]
be a split extension with strong section in the variety $\BH$. Then the lattice operations of join and meet of $X \rtimes_{\xi} B$ can be described as
\[
(x,b)\wedge (y,b')=(s(b \wedge b')\ra (s(b \wedge b')\cd (x \wedge y)), \ b \wedge b')
\]
and
\begin{align*}(x,b)\vee (y,b')&=(s(b \vee b') \ra (s(b \vee b')\cd (((s(b'\ra b)\ra (x \ra y))\ra y)\\ &\wedge (s(b\ra b')\ra (y \ra x))\ra x)), \ b \vee b').
\end{align*}
\end{proposition}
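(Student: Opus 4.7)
The plan is to transport each computation to $A$ via the isomorphism $X\rtimes_\xi B\cong A$. In the strong section case, this isomorphism sends $(x,b)\mapsto s(b)\cd x$, with inverse $a\mapsto(sp(a)\ra a,\,p(a))$ (which is well-defined since $a\le sp(a)$ and hence $a=sp(a)\cd(sp(a)\ra a)$ by divisibility). The crux for the meet is the identity
\[
x\cd s(c)=x\wedge s(c)\qquad (x\in X,\ c\in B),
\]
which follows from divisibility $x\wedge s(c)=x\cd(x\ra s(c))$ combined with the strong section condition $x\ra s(c)=s(c)$. Applying it twice and using that $s$ is a hoop morphism (hence preserves $\wedge$), I would compute in $A$:
\[
(s(b)\cd x)\wedge(s(b')\cd y)=(x\wedge s(b))\wedge(y\wedge s(b'))=(x\wedge y)\wedge s(b\wedge b')=s(b\wedge b')\cd(x\wedge y),
\]
and then pull back through the inverse isomorphism (using that $p$ preserves meets, so $p$ of this element is $b\wedge b'$) to recover the claimed meet formula.

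For the join, my strategy is to exploit the identity $u\vee v=((u\ra v)\ra v)\wedge((v\ra u)\ra u)$, valid in every basic hoop, and then apply the meet formula just obtained. Using \Cref{thss}, $((x,b)\ra(y,b'))\ra(y,b')$ unfolds to
\[
\bigl(s(b'\ra(b\ra b'))\ra\bigl((s(b'\ra b)\ra(x\ra y))\ra y\bigr),\,(b\ra b')\ra b'\bigr).
\]
The crucial simplification is $b'\ra(b\ra b')=1$ (by currying: $b'\ra(b\ra b')=(b'\cd b)\ra b'=1$, since $b'\cd b\le b'$), so $s(b'\ra(b\ra b'))=1$ and the outer prefix collapses. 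Symmetrically, $((y,b')\ra(x,b))\ra(x,b)=\bigl((s(b\ra b')\ra(y\ra x))\ra x,\,(b'\ra b)\ra b\bigr)$. Since $((b\ra b')\ra b')\wedge((b'\ra b)\ra b)=b\vee b'$ in basic hoops, applying the meet formula to these two terms produces the claimed expression.

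The main obstacle is the meet simplification: the literal expansion via \Cref{thss} produces the unwieldy $s(b\wedge b')\cd x\cd(s(b'\ra b)\ra(x\ra y))$, and reducing this to $s(b\wedge b')\cd(x\wedge y)$ by symbolic manipulation within $X\rtimes_\xi B$ alone is awkward. The detour through $A$ via the identity $x\cd s(c)=x\wedge s(c)$ makes the reduction immediate, after which the join computation becomes essentially mechanical.
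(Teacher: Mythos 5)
Your argument is correct, and for the meet it takes a genuinely different (and arguably cleaner) route than the paper, while the join part coincides with the paper's. The paper computes $(x,b)\wedge(y,b')=(x,b)\cd((x,b)\ra(y,b'))$ entirely inside $X\rtimes_{\xi}B$, expanding via the operations of \Cref{thss} and then simplifying the first component $s(b\wedge b')\cd x\cd(s(b'\ra b)\ra(x\ra y))$ by hand: it applies the hoop axiom $u\cd(u\ra v)=v\cd(v\ra u)$ with $u=s(b'\ra b)$ and $v=x\ra y$, and then uses the strong-section absorption $(x\ra y)\ra s(b'\ra b)=s(b'\ra b)$ to reach $s(b\wedge b')\cd(x\wedge y)$ --- precisely the ``unwieldy'' manipulation you chose to sidestep. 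Your detour through $A$ via $(x,b)\mapsto s(b)\cd x$, together with the identity $x\cd s(c)=x\wedge s(c)$ (divisibility plus the strong-section condition), replaces that computation with associativity of $\wedge$ and the fact that $s$ preserves the derived term $\wedge$; this is legitimate because the operations on $X\rtimes_{\xi}B$ are by construction transported along that bijection, so it is a hoop isomorphism and preserves all derived lattice operations. The only thing the paper's version buys is that it stays entirely within the semidirect product, so the same symbolic steps can be reused verbatim in the later subvariety computations. For the join, the two proofs are essentially identical: both use $u\vee v=((u\ra v)\ra v)\wedge((v\ra u)\ra u)$, the collapse $s(b'\ra(b\ra b'))=s(1)=1$, and then feed the resulting pairs into the meet formula, using $((b\ra b')\ra b')\wedge((b'\ra b)\ra b)=b\vee b'$.
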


\begin{proof}
One has
\begin{align*}
(x,b)\wedge (y,b')&=(x,b)\cd ((x,b)\ra (y,b'))\\
&=(x,b)\cd(s(b'\ra b)\ra (x \ra y), \ b \ra b')\\
&=(s(b \cd (b \ra b'))\ra (s(b \cd (b \ra b'))\cd x \cd (s(b'\ra b)\ra (x \ra y)), \ b \cd (b \ra b'))\\
&=(s(b \wedge b')\ra (s(b')\cd s(b'\ra b)\cd x \cd (s(b'\ra b)\ra (x \ra y)), \ b \wedge b')\\
&\overset{\text{H3}}{=}s(b \wedge b')\ra (s(b')\cd x \cd (x \ra y)\cd ((x \ra y)\ra s(b'\ra b))), \ b \wedge b')\\
&=(s(b \wedge b')\ra (s(b') \cd s(b'\ra b)\cd (x \wedge y), \ b \wedge b')\\
&=(s(b \wedge b') \ra (s(b\wedge b')\cd (x \wedge y)), \ b \wedge b').
\end{align*}
Here we used the identity
\[
(x \ra y)\ra s(b'\ra b)=sp(x \ra y)\ra s(b'\ra b)=1 \ra s(b'\ra b)=s(b'\ra b).
\]
Moreover,
\begin{align*}
(x,b)\vee (y,b')&=(((x,b)\ra (y,b'))\ra (y,b'))\wedge (((y,b')\ra (x,b))\ra (x,b))\\
&=((s(b'\ra b)\ra (x\ra y), \ b \ra b')\ra (y,b'))\wedge ((s(b \ra b')\\ &\ra (y \ra x), \ b'\ra b)\ra (x,b)\\
&=(s(b'\ra (b \ra b'))\ra ((s(b'\ra b)\ra (x\ra y))\ra y), \\ &(b \ra b')\ra b') \wedge (s(b \ra (b' \ra b))\\ &\ra ((s(b \ra b')\ra (y \ra x))\ra x), \ (b' \ra b)\ra b)\\
&=((s (b' \ra b)\ra (x \ra y))\ra y, \ (b \ra b')\ra b')\\ &\wedge((s(b \ra b')\ra (y \ra x)) \ra x, \ (b' \ra b)\ra b)\\
&=(s(b \vee b') \ra (s(b \vee b')\cd (((s(b'\ra b) \\ &\ra (x \ra y))\ra y)\wedge ((s(b\ra b')\ra (y \ra x))\ra x))), \ b \vee b'),
\end{align*}
for any $(x,b),(y,b') \in X \rtimes_\xi B$, since
\begin{align*}
b' \ra (b \ra b')=&(b \cd b')\ra b'\\
=&b \ra (b' \ra b')\\
=&b \ra 1=1\\
=&b \ra (b' \ra b).
\end{align*}
\end{proof}

We give now an example of split extension with strong section in the subvariety of basic hoops.

\begin{example}\cite{Rump1}\label{ex_Rump}
Let $A$ be a BL-algebra. We denote by $\operatorname{MV}(A)$ the set of \emph{regular elements} of $A$
\[
\operatorname{MV}(A)=\{x \in A \mid \neg \neg x=x\},
\]
and by $\operatorname{D}(A)$ the set of \emph{dense} elements of $A$
\[
\operatorname{D}(A)=\{x \in A \mid \neg \neg x=1\}.
\]
By~\cite[Theorem 1.2]{cignolitorrens1}, $\operatorname{MV}(A)$ is a subalgebra of $A$, which is an MV-algebra, and the map $p \colon A \ra \operatorname{MV}(A)$ defined by $p(a)=\neg \neg a$, is a split epimorphism of BL-algebras with kernel $\operatorname{D}(A)$. Thus, if we denote by $i$ and $j$ the canonical inclusions of $\operatorname{MV}(A)$ and $\operatorname{D}(A)$ inside $A$, we have a split extension
\[
\begin{tikzcd}
{\operatorname{D}(A)} & A & {\operatorname{MV}(A)}
\arrow["i", from=1-1, to=1-2]
\arrow["p", shift left, from=1-2, to=1-3]
\arrow["j", shift left, from=1-3, to=1-2]
\end{tikzcd}
\]
in the variety $\BH$ which has strong section, i.e., for any $x \in A$ and $y \in \operatorname{MV}(A)$ we have
\begin{equation}\label{eq_rump}
x \ra y= \neg \neg x \ra y.
\end{equation}
Indeed, let $\{A_{i}\}_{i \in I}$ be an indexed family of BL-chains such that $A$ is the subdirect product of $A_{i}$, for $i \in I$\footnote{The Hájek's subdirect representation theorem (see~\cite[Lemma 2.3.16]{hajek}) states that every BL-algebra $A$ is a subdirect product of totally ordered BL-algebras (BL-chains).}. Then, for any $i \in I$, we have:
\begin{itemize}
\item If $x_{i} \in \operatorname{MV}(A_{i})$, then $\neg \neg x_{i}=x_{i}$ and
\[
x_{i} \ra y_{i}=\neg \neg x_{i} \ra y_{i},
\]
for any $y_i \in \operatorname{MV}(A_i)$.
\item If $x_{i} \in \operatorname{D}(A_{i})$, then
$\neg \neg x_{i}=1_{i}$ and
\[
x_{i}\ra y_{i}=y_{i}=1_i \ra y_i,
\]
for any $y_i \in \operatorname{MV}(A_i)$.
\end{itemize}
Since any BL-chain $A_i$ is the ordinal sum of its regular elements and its dense elements (see~\cite[Theorem 2.2]{Busaniche}), \eqref{eq_rump} holds in $A_{i}$, and hence in $A$.
\end{example}

\section{Strong external actions of hoops}\label{sec_exact}

In this section, we aim to describe split extensions with strong section in the variety $\Hp$ in terms of strong external actions.

\begin{definition}\label{ext_act}
Let $B,X$ be hoops. A \emph{strong external action} of $B$ on $X$ consists of a pair of maps
\begin{align*}
&f \colon B \times X \rightarrow X \colon (b,x) \mapsto f_b(x),\\
&g \colon B \times X \rightarrow X \colon (b,x) \mapsto g_b(x)    
\end{align*}
such that
\begin{enumerate}
\item[E1.] $f_{b}(1)=g_{b}(1)=1$; 
\item[E2.] $f_{1}=g_{1}=\id_{X}$;
\item[E3.] $f_{b_{1} \cdot b_{2}}(x \cdot g_{b_{1}}(x \ra y))=f_{b_{1} \cdot b_{2}}(x \cdot (x \ra y))$;
\item[E4.] $g_{(b_{3} \ra (b_{1} \cdot b_{2}))}(f_{b_{1} \cdot b_{2}}(x \cdot y) \ra f_{b_{3}}(z)) = 
g_{(b_{2} \ra b_{3}) \ra b_{1}}(x \ra g_{b_{3} \ra b_{2}}(y \ra f_{b_{3}}(z))$,
\end{enumerate}
for any $b,b_1,b_2,b_3 \in B$ and $x,y,z \in X$. 
\end{definition}
    
We denote by $\EAct(B,X)$ the set of the strong external actions of $B$ on $X$.
    
\begin{remark}
Strong external actions give rise to a functor
\[
\EAct(-,X) \colon \Hp^{\operatorname{op}} \ra \Set
\]
which maps every hoop $B$ to $\EAct(B,X)$, and every morphism $\phi \colon B' \ra B$ in $\Hp$ to the function
\[
\EAct(\phi, X) \colon \EAct(B,X) \ra \EAct(B',X)
\]
which sends an external action $f,g \colon B \times X \rightarrow X$ to the external action $f',g' \colon B' \times X \rightarrow X$ defined by
\[
f'(b',x)=f(\phi(b'),x) \; \text{ and } \; g'(b',x)=g(\phi(b'),x).
\]
\end{remark}

Our goal now is to make clear the connection between strong external actions and split extensions with strong section in the variety of hoops. In fact, the next two propositions show that the two notions are equivalent.

\begin{proposition}\label{split_to_strong}
Let $B,X$ be hoops and let $f,g \colon B \times X \rightarrow X$ be a strong external action. Then the set
\[
Y'=\{(x,b) \in X \times B \mid f_{b}(x)=x\}
\]
endowed with the operations 
\begin{align}
\label{op1}(x,b)\ra (y,b')&=(g_{b'\ra b}(x \ra y), \ b\ra b'),\\
\label{op2}(x,b)\cdot (y,b')&=(f_{b\cdot b'}(x \cdot y), \ b \cdot b')
\end{align}
and unit $1_{Y'}=(1,1)$ is a hoop. Furthermore, the split extension
\begin{equation}\label{splext}
\begin{tikzcd}
X & Y' & B
\arrow["{\iota_{1}}", from=1-1, to=1-2]
\arrow["{\pi_{2}}", shift left, from=1-2, to=1-3]
\arrow["{\iota_{2}}", shift left, from=1-3, to=1-2]
\end{tikzcd}
\end{equation}
where $\iota_{1}(x)=(x,1)$, $\pi_{2}(x,b)=b$ and $\iota_{2}(b)=(1,b)$, has strong section.
\end{proposition}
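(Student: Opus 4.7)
The plan is to proceed in three stages. First, I would verify that the operations \eqref{op1} and \eqref{op2} take values in $Y'$, so that they endow $Y'$ with a well-defined algebra structure of type $(2,2,0)$. Given $(x,b), (y,b') \in Y'$, this amounts to checking $f_{b \cdot b'}(f_{b \cdot b'}(x \cdot y)) = f_{b \cdot b'}(x \cdot y)$ and $f_{b \ra b'}(g_{b' \ra b}(x \ra y)) = g_{b' \ra b}(x \ra y)$, which should follow from suitable specializations of E3 and E4 together with the hypotheses $f_b(x) = x$, $f_{b'}(y) = y$, and the hoop structure on $X$ and $B$.

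Next, I would verify the hoop axioms for $Y'$. Commutativity of $\cdot$ and the monoid structure with unit $(1,1)$ follow from E1, E2 and the corresponding properties of $X$ and $B$; the identity $(x,b) \ra (x,b) = (1,1)$ reduces to $(g_1(1),1) = (1,1)$, which holds by E1 and E2. The key step for the divisibility axiom $(x,b) \cdot ((x,b) \ra (y,b')) = (y,b') \cdot ((y,b') \ra (x,b))$ is the hoop identity $b \cdot (b \ra b') = b' \cdot (b' \ra b) = b \wedge b'$ in $B$, which forces both sides to have second component $b \wedge b'$; for the first components, E3 applied with $(b_1, b_2) = (b' \ra b, b')$ on the left and $(b_1, b_2) = (b \ra b', b)$ on the right reduces the equality to the divisibility axiom in $X$. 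Finally, the pseudo-associativity axiom $((x,b) \cdot (y,b')) \ra (z,b'') = (x,b) \ra ((y,b') \ra (z,b''))$ follows directly from E4 with $(b_1, b_2, b_3) = (b, b', b'')$, combined with the hoop identity $(b \cdot b') \ra b'' = b \ra (b' \ra b'')$ in $B$.

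For the last stage, the maps $\iota_1$, $\pi_2$, and $\iota_2$ are routinely checked to be hoop morphisms using E1, E2 and $f_1 = g_1 = \id_X$; the relation $\pi_2 \iota_2 = \id_B$ is immediate, and $\iota_1(X) = \Ker \pi_2$ holds because $f_1 = \id_X$ ensures $(x,1) \in Y'$ for every $x \in X$. The strong section property $(x,b') \ra \iota_2(b) = \iota_2 \pi_2(x,b') \ra \iota_2(b)$ unfolds, via \eqref{op1}, to $(g_{b \ra b'}(x \ra 1), b' \ra b) = (g_{b \ra b'}(1 \ra 1), b' \ra b)$, and both sides reduce to $(1, b' \ra b)$ by E1. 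The main obstacle is expected to lie in the closure under the operations in the first stage, as no single axiom directly targets it, and in the index bookkeeping for divisibility, which becomes manageable only once one recognizes that the relevant products of indices in $B$ all collapse to $b \wedge b'$.
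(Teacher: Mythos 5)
Your second and third stages reproduce the paper's argument essentially step for step: axiom (ii) of the hoop reduces to $(g_1(1),1)=(1,1)$ via E1--E2; divisibility is obtained from the identity $b\cdot(b\ra b')=b'\cdot(b'\ra b)$ in $B$ together with E3 specialized at exactly the pairs $(b'\ra b,\,b')$ and $(b\ra b',\,b)$ that you name, reducing the first components to divisibility in $X$; residuation is E4 applied verbatim at $(b,b',b'')$; and the strong-section identity collapses both sides to $(1,\,b'\ra b)$ by E1. For everything the paper actually verifies, your plan is correct and follows the same route.

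The genuine gap is your first stage, which you leave as an expectation rather than a proof. You are right that one must check that $Y'$ is closed under \eqref{op1} and \eqref{op2}, i.e.\ that $f_{b\cdot b'}\bigl(f_{b\cdot b'}(x\cdot y)\bigr)=f_{b\cdot b'}(x\cdot y)$ and $f_{b\ra b'}\bigl(g_{b'\ra b}(x\ra y)\bigr)=g_{b'\ra b}(x\ra y)$, but ``suitable specializations of E3 and E4'' is not yet an argument. Setting $x=1$ in E3 yields $f_{b_1\cdot b_2}\circ g_{b_1}=f_{b_1\cdot b_2}$, an identity of the wrong shape (you need $f_c\circ g_d=g_d$ and $f_c\circ f_c=f_c$), and extracting information about $f$ alone from E4 is delicate: for instance the specialization $b_2=1$, $y=1$, $b_3=b_1$ formally gives $f_{b_1}(x)\ra z=x\ra z$ for all $x,z\in X$, hence $f_{b_1}=\id_X$, which would make closure vacuous but clearly conflicts with the intended nontrivial examples; so either the quantification in E1--E4 must be handled more carefully or a genuine derivation of closure must be supplied. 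The same caveat applies to your claim that the monoid laws ``follow from E1, E2 and the corresponding properties of $X$ and $B$'': associativity of \eqref{op2} on $Y'$ amounts to $f_{b\cdot b'\cdot b''}\bigl(f_{b\cdot b'}(x\cdot y)\cdot z\bigr)=f_{b\cdot b'\cdot b''}\bigl(x\cdot f_{b'\cdot b''}(y\cdot z)\bigr)$, which E1--E2 do not give. To be fair, the paper's own proof silently omits both closure and the monoid axioms and only checks (ii)--(iv) and the strong section, so you have correctly located the weak point of the proposition; but as written your first stage is a promissory note, and it is precisely the part that cannot be waved through.
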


\begin{proof}
We start by showing that $Y'$ is a hoop. Again, the labels above the equality signs point to the axioms used in the steps.
\begin{itemize}
\item[H1.] It follows from axiom E1 that
\[
(x,b)\ra (x,b)=(g_{b \ra b}(x \ra x), \ b \ra b)=(g_{1}(1), 1)=(1,1),
\]
for any $(x,b) \in Y'$.
\item[H2.] We have
\begin{align*}
(x,b)\cd ((x,b)\ra (y,b'))&=(x,b)\cd(g_{b' \ra b}(x \ra y), \ b \ra b')\\
&=(f_{b \cd (b \ra b')}(x \cd g_{b'\ra b}(x \ra y)), \ b\cd (b \ra b'))\\
&=(f_{b' \cd (b' \ra b)}(x \cd g_{b'\ra b}(x \ra y)), \ b\cd (b \ra b'))\\
&\overset{\text{E3}}{=}(f_{b'\cd (b' \ra b)}(x \cd (x \ra y)), \ b \cd ( b \ra b'))\\
&=(f_{b\cd (b \ra b')}(y \cd (y \ra x)), \ b' \cd (b' \ra b))\\
&\overset{\text{E3}}{=}(f_{b' \cd (b'\ra b)}(y \cd g_{b\ra b'}(y \ra x)), \ b' \cd (b'\ra b))\\
&=(y,b')\cd ((y,b')\ra (x,b)),
\end{align*}
for any $(x,b),(y,b') \in Y'$.
\item[H3.] We have
\begin{align*}
((x,b_1)\cd(y,b_2))\ra (z,b_3)&=(f_{b_{1}\cd b_{2}}(x\cd y), \ b_{1}\cd b_{2})\ra (z,b_{3})\\
&=(g_{b_{3}\ra (b_{1}\cd b_{2})}(f_{b_{1}\cd b_{2}}(x\cd y)\ra z), \ (b_{1}\cd b_{2}) \ra b_{3})
\end{align*}
and
\begin{align*}
(x, b_{1})\ra ((y,b_{2})\ra &(z, b_{3}))=
(x,b_{1})\ra (g_{b_{3}\ra b_{2}}(y \ra z), \ b_{2}\ra b_{3})\\
&=(g_{(b_{2}\ra b_{3})\ra b_{1}}(x \ra g_{b_{3}\ra b_{2}}(y \ra z)), \ b_{1}\ra (b_{2}\ra b_{3})),
\end{align*}
for any $(x,b_1),(y,b_2),(z,b_3) \in Y'$. Hence, by axiom E4 we get
\[
((x,b_{1})\cd (y, b_{2}))\ra (z,b_{3})=(x,b_{1})\ra ((y,b_{2})\ra (z,b_{3})).
\]
\end{itemize}
Thus, $Y'$ is a hoop. Furthermore, the split extensions \eqref{splext} has strong section since
\begin{align*}
(x,b) \ra \iota_2(b')&=(x,b) \ra (1,b')\\
&=(x \ra g_{b \ra b'}(1),b \ra b')\\
&=(x \ra 1, b \ra b')=(1,b \ra b')
\end{align*}
and
\begin{align*}
\iota_2\pi_2(x,b) \ra \iota_2(b')=&\iota_2(b) \ra \iota_2(b')\\
=&(1,b) \ra (1,b')=(1,b \ra b').
\end{align*}
\end{proof}

\begin{proposition}\label{strong_to_split}
Let 
\[\begin{tikzcd}
X & A & B,
\arrow["k", from=1-1, to=1-2]
\arrow["p", shift left, from=1-2, to=1-3]
\arrow["s", shift left, from=1-3, to=1-2]
\end{tikzcd}\] 
be a split extension with strong section in $\Hp$. Then the pair of maps
\[
f,g \colon B \times X \rightarrow X
\]
defined by 
\[
f_{b}(x)=s(b)\ra (s(b)\cdot x) \; \text{ and } \; g_{b}(x)=s(b)\ra x
\]
gives rise to a strong external action of $B$ on $X$.
\end{proposition}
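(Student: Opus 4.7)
The plan is to verify the four axioms E1--E4 from \Cref{ext_act} one by one for the pair $(f,g)$ defined from the split extension, using the hoop axioms of $A$ together with the strong section property.

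Axioms E1 and E2 will follow from an immediate computation, using that $s$ is a hoop homomorphism and hence $s(1)=1$: indeed $f_b(1)=s(b)\ra s(b)=1$ and $g_b(1)=s(b)\ra 1=1$, while $f_1(x)=1\ra x=x=g_1(x)$.

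For E3, the main observation I would exploit is that the kernel $X$ is a subhoop of $A$ closed under $\ra$, so that $x\ra y\in X$ whenever $x,y\in X$. The strong section condition, specialised to $a=x\ra y\in X$, then reads $(x\ra y)\ra s(b_1)=s(b_1)$. By divisibility in $A$, this rewrites as
\[
s(b_1)\wedge (x\ra y)\;=\;(x\ra y)\cd s(b_1).
\]
Starting from $x\cd g_{b_1}(x\ra y)=x\cd(s(b_1)\ra(x\ra y))$ and using the residuation identity $s(b_1)\ra(x\ra y)=(s(b_1)\cd x)\ra y$ together with divisibility, I would rewrite
\[
s(b_1 b_2)\cd x\cd g_{b_1}(x\ra y)\;=\;s(b_2)\cd \bigl(s(b_1) x\cd ((s(b_1) x)\ra y)\bigr)\;=\;s(b_2)\cd(s(b_1) x\wedge y).
\]
The identity extracted from the strong section then collapses the right-hand side to $s(b_1 b_2)\cd(x\wedge y)=s(b_1 b_2)\cd x\cd(x\ra y)$, and E3 follows from the fact that $f_{b_1 b_2}(u)=s(b_1 b_2)\ra(s(b_1 b_2)\cd u)$ depends only on $s(b_1 b_2)\cd u$.

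For E4, my strategy would be analogous but combinatorially more involved. I would first expand both sides using $g_c(w)=s(c)\ra w$ and the adjunction $s(c)\ra(w\ra z)=(s(c)\cd w)\ra z$, reducing E4 to the assertion
\[
\bigl(s(b_3\ra b_1 b_2)\cd f_{b_1 b_2}(x\cd y)\bigr)\ra z\;=\;\bigl(s(((b_2\ra b_3)\ra b_1)\cd(b_3\ra b_2))\cd x\cd y\bigr)\ra z.
\]
A useful intermediate fact is that, by the strong section, $f_b(u)=g_b(u)$ whenever $u\in X$ (apply strong section to $u$ to get $u\cd s(b)=u\wedge s(b)$, hence $s(b)\ra u=s(b)\ra(s(b)\cd u)$). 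This lets me replace $f_{b_1 b_2}(xy)$ by $s(b_1 b_2)\ra(xy)$ and then repeatedly apply the identities $s$ commutes with $\cd$ and $\ra$, divisibility in $B$ (giving $b_3\cd(b_3\ra b_1 b_2)=b_3\wedge b_1 b_2$ and similar for the right-hand side), and the strong section on $xy\in X$, to match the two expressions.

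The main obstacle I foresee is the sheer bookkeeping in E4: three parameters $b_1,b_2,b_3$ together with three variables $x,y,z$ produce nested residuations on both sides, and matching them requires applying the strong section at several different places while correctly rearranging via associativity and the adjoint identities. By contrast, E1, E2 and E3 are tightly controlled by the single key identity $(x\ra y)\ra s(b_1)=s(b_1)$ coming from the strong section, so the difficulty is concentrated in keeping the E4 computation transparent.
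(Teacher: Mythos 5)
Your proposal follows essentially the same route as the paper: a direct verification of E1--E4 by computation in $A$, with E1--E2 immediate and E3--E4 driven by the strong-section identity $u\ra s(b)=s(b)$ for $u\in X$ (equivalently $u\cd s(b)=u\wedge s(b)$), the adjunction $(a\cd b)\ra c=a\ra (b\ra c)$, and the fact that $s$ is a homomorphism. Your E3 argument is sound (indeed slightly cleaner than the paper's, since $s(b_1)\cd g_{b_1}(x\ra y)=s(b_1)\wedge (x\ra y)=(x\ra y)\cd s(b_1)$ collapses everything at once), and for E4 your displayed unfolding and the auxiliary fact that $f_b$ and $g_b$ agree on $X$ are both correct; the rewriting you defer is precisely the long chain of manipulations that constitutes the bulk of the paper's proof, where both sides are brought to the common form $(s(b_3\ra (b_1\cd b_2))\cd x\cd y)\ra z$.
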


\begin{proof}
We have to show that $(f,g)$ satisfies the axioms of \Cref{ext_act}.
\begin{itemize}
\item[E1.] For any $b \in B$, one has $f_{b}(1)=s(b)\ra (s(b)\cd 1)=s(b)\ra s(b)=1$ and $g_{b}(1)=s(b)\ra 1=1$.
\item[E2.] For any $x \in X$, one has $f_{1}(x)=s(1)\ra (s(1)\cd x)=1 \ra (1 \cd x)=x$ and $g_{1}(x)=s(1)\ra x=1 \ra x=x$.
\item[E3.] We have
\begin{align*}
f_{b\cd b'}(x \cd g_{b}(x \ra y))&
=s(b\cd b')\ra (s(b\cd b')\cd (x \cd (s(b)\ra(x \ra y))))\\
&=s(b\cd b') \ra (s(b) \cd s(b') \cd x \cd (s(b) \ra (x \ra y)))\\
&=s(b \cd b') \ra (s(b) \cd (s(b) \ra (x \ra y))  \cd s(b')\cd x)\\
&\overset{\text{H3}}{=}s(b \cd b') \ra ((x \ra y)\cd ((x \ra y) \ra s(b))\cd s(b') \cd x)\\
&=s(b \cd b') \ra ((x \ra y)\cd (sp(x \ra y) \ra s(b))\cd s(b') \cd x)
\\
&=s(b \cd b') \ra ((x \ra y)\cd (1 \ra s(b))\cd s(b') \cd x)\\
&=s(b \cd b') \ra ((x \ra y)\cd s(b) \cd s(b') \cd x)\\
&=s(b \cd b') \ra ( s(b \cd b') \cd x\cd (x \ra y))
\\&=f_{b \cd b'}(x \cd (x \ra y))
\end{align*}
for any $x,y \in X$ and $b,b' \in B$.
\item[E4.] It is possible to check that
\begin{align*}
g&_{b_{3}\ra (b_{1}\cd b_{2})}(f_{b_{1}\cd b_{2}}(x\cd y)\ra f_{b_{3}}(z))\\
&=s(b_{3}\ra (b_{1}\cd b_{2}))\ra ((s(b_{1}\cd b_{2})\ra (s(b_{1}\cd b_{2})\cd x\cd y))\ra (s(b_3)\ra s(b_3)\cd  z))\\
&\overset{\text{H4}}{=}s(b_3 \ra (b_1\cd b_2))\ra ((s(b_3) \cd ((s(b_1)\cd s(b_2))\ra (s(b_1)\cd s(b_2)\cd x\cd y)))\ra (s(b_3)\cd z))\\
&\overset{\text{H4}}{=}(s(b_3) \cd (s(b_1)\cd s(b_2)) \ra (s(b_1)\cd  s(b_2)\cd x\cd y)) \cd (s(b_3) \ra (s(b_1)\cd s(b_2))))\ra (s(b_3)\cd z)\\
&\overset{\text{H3}}{=}(((s(b_1)\cd s(b_2)) \ra (s(b_1)\cd s(b_2)\cd x\cd y)) \cd s(b_1)\cd s(b_2) \cd ((s(b_1)\cd s( b_2)) \ra s(b_3)))\ra (s(b_3)\cd z)\\
&\overset{\text{H3}}{=}(s(b_1)\cd s(b_2)\cd x\cd y \cd ((s(b_1)\cd s(b_2)\cd x\cd y) \ra (s(b_1)\cd s(b_2)))\cd ((s(b_1)\cd s(b_2)) \ra s(b_3)))\ra (s(b_3)\cd z)\\
&=(s(b_1)\cd s(b_2)\cd x\cd y \cd ((s(b_1)\cd s(b_2)) \ra s(b_3)))\ra (s(b_3)\cd z)\\
&\overset{\text{H3}}{=}(x\cd y\cd s(b_3)\cd (s(b_3) \ra (s(b_1)\cd s(b_2))))\ra (s(b_3)\cd z)\\
&\overset{\text{H4}}{=}(x\cd y \cd(s(b_3) \ra (s(b_1)\cd s(b_2))))\ra (s(b_3) \ra (s(b_3)\cd z)),
\end{align*} 
and
\begin{align*}
g&_{(b_{2}\ra b_{3})\ra b_{1}}(x\ra g_{b_{3}\ra b_{2}}(y \ra f_{b_3}(z)))\\ 
&=s((b_{2}\ra b_{3})\ra b_{1}) \ra (x \ra (s(b_{3}\ra b_{2})\ra (y \ra (s(b_3) \ra (s(b_3)\cd z)))))\\
&\overset{\text{H4}}{=}s((b_{2}\ra b_{3})\ra b_{1})\ra (x \ra ((y \cd s(b_{3}\ra b_{2}))\ra (s(b_{3})\ra (s(b_{3})\cd z))))\\
&\overset{\text{H4}}{=}s((b_{2}\ra b_{3})\ra b_{1})\ra (x \ra (y \ra (s(b_{3}\ra b_{2})\ra (s(b_{3})\ra (s(b_{3})\cd z))))) \\
&\overset{\text{H4}}{=}s((b_{2}\ra b_{3})\ra b_{1})\ra (x \ra (y \ra (s(b_{3})\cd (s(b_{3})\ra s(b_{2}))\ra (s(b_{3})\cd z))))\\
&\overset{\text{H3}}{=}s((b_{2}\ra b_{3})\ra b_{1})\ra (x \ra (y \ra (s(b_{2})\cd (s(b_{2})\ra s(b_{3}))\ra (s(b_{3})\cd z))))\\
&\overset{\text{H4}}{=}s((b_2 \ra b_3)\ra b_1)\ra (x \ra ((y \cd (s(b_2) \cd (s(b_2) \ra s(b_3))))\ra (s(b_3)\cd z)))\\
&\overset{\text{H4}}{=}s((b_2 \ra b_3)\ra b_1)\ra ((x\cd y\cd s(b_2)\cd (s(b_2) \ra s(b_3)))\ra (s(b_3)\cd z))\\
&\overset{\text{H4}}{=}(((s(b_2) \ra s(b_3))\ra s(b_1))\cd (x \cd y \cd s(b_2) \cd (s(b_2) \ra s(b_3))))\ra (s(b_3)\cd z)\\
&=((s(b_2) \ra s(b_3))\cd ((s(b_2) \ra s(b_3)) \ra s(b_1))\cd x \cd y \cd s(b_2))\ra (s(b_3)\cd z)\\
&\overset{\text{H3}}{=}(s(b_1) \cd (s(b_1) \ra (s(b_2) \ra s(b_3)))\cd x \cd y \cd s(b_2))\ra (s(b_3)\cd z)\\
&=(s(b_1) \cd s(b_2) \cd ((s(b_1)\cd s(b_2)) \ra s(b_3))\cd x \cd y)\ra (s(b_3)\cd z)\\
&\overset{\text{H3}}{=}(s(b_3) \cd (s(b_3) \ra (s(b_1)\cd s(b_2)))\cd x \cd y)\ra (s(b_3)\cd z)\\
&\overset{\text{H4}}{=}((s(b_3) \ra (s(b_1)\cd s(b_2)))\cd x \cd y)\ra (s(b_3) \ra (s(b_3)\cd z)),
\end{align*}
for any $x,y,z \in X$ and $b_1,b_2,b_3 \in B$, where in equalities above we used the identity
\[
(x \cd s(b))\ra s(b)=sp(x \cd s(b))\ra s(b)=(1 \cd s(b))\ra s(b)=1.
\]
\end{itemize}
\end{proof}

As a consequence, we get the following.

\begin{proposition}\label{prop}
Let $B,X$ be hoops. There is a bijection $\tau_B$ between the sets $\spl(B,X)$ and $\EAct(B,X)$.
\end{proposition}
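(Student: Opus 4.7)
The plan is to assemble the bijection $\tau_B$ from Propositions~\ref{strong_to_split} and~\ref{split_to_strong}, which perform the two constructions in opposite directions. Concretely, I would define $\tau_B$ on a representative split extension with strong section by the formulas $f_b(x)=s(b)\ra(s(b)\cd x)$ and $g_b(x)=s(b)\ra x$ of Proposition~\ref{strong_to_split}; any isomorphism of split extensions fixing $X$ and $B$ commutes with the sections, so the section $s$, and hence the pair $(f,g)$, depends only on the isomorphism class, which makes $\tau_B$ well-defined. The candidate inverse $\sigma_B$ sends a strong external action $(f,g)$ to the isomorphism class of the split extension $X\hookrightarrow Y'\twoheadrightarrow B$ produced in Proposition~\ref{split_to_strong}.

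To verify $\tau_B\circ\sigma_B=\id_{\EAct(B,X)}$, starting from $(f,g)$ I would read off the strong external action $(\tilde f,\tilde g)$ attached to the extension produced by $\sigma_B$. Using the section $\iota_2(b)=(1,b)$ and the operations~\eqref{op1}--\eqref{op2}, one computes $(1,b)\cd(x,1)=(f_b(x),b)$, whence $(1,b)\ra(f_b(x),b)=(g_1(f_b(x)),1)=(f_b(x),1)$, yielding $\tilde f_b(x)=f_b(x)$; similarly $(1,b)\ra(x,1)=(g_b(x),1)$ gives $\tilde g_b(x)=g_b(x)$.

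For the other composite $\sigma_B\circ\tau_B=\id_{\spl(B,X)}$, I would build an isomorphism between the semidirect product $Y'$ attached to $\tau_B([A])$ and the original $A$. Guided by Proposition~\ref{prop_ss}, the natural map is
\[
\varphi\colon Y'\longrightarrow A,\qquad (x,b)\mapsto k(x)\cd s(b),
\]
with set-theoretic inverse $\psi\colon a\mapsto(sp(a)\ra a,\,p(a))$. The strong-section identity $a\ra sp(a)=sp(a)\ra sp(a)=1$ combined with the hoop axiom $sp(a)\cd(sp(a)\ra a)=a\cd(a\ra sp(a))=a$ makes $\varphi$ and $\psi$ mutually inverse, while compatibility of $\varphi$ with the kernels, projections, and sections on both sides is immediate. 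Protomodularity of $\Hp$ then upgrades $\varphi$ to an isomorphism of split extensions, concluding the argument.

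The main obstacle I foresee is checking that $\varphi$ is actually a hoop homomorphism, i.e., that the operations on $Y'$ prescribed by~\eqref{op1}--\eqref{op2} are transported by $\varphi$ into the operations on $A$. The definitions in Proposition~\ref{split_to_strong} are reverse-engineered precisely so that this compatibility holds under the correspondence $(x,b)\leftrightarrow k(x)\cd s(b)$, so the verification ultimately amounts to unwinding $f_b(x)=s(b)\ra(s(b)\cd x)$ and $g_b(x)=s(b)\ra x$ inside the hoop identities valid in $A$ — tedious but routine, and largely already encoded in the proofs of Propositions~\ref{split_to_strong} and~\ref{strong_to_split}.
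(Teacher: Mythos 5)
Your proposal is correct and follows essentially the same route as the paper: the paper defines $\tau_B$ by the formulas $f_{b}(x)=s(b)\ra (s(b)\cdot x)$, $g_{b}(x)=s(b)\ra x$ and declares its inverse to be the construction of Proposition~\ref{split_to_strong}, leaving the verification to the reader. You additionally spell out the two composites and the isomorphism $(x,b)\mapsto k(x)\cd s(b)$ (whose homomorphism property is exactly what Theorem~\ref{thss} already records), so your write-up is, if anything, more complete than the paper's.
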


\begin{proof}
We define $\tau_B$ as follows: given any split extension with strong section in the variety $\Hp$ 
\[
\begin{tikzcd}
X & A & B,
\arrow["k", from=1-1, to=1-2]
\arrow["p", shift left, from=1-2, to=1-3]
\arrow["s", shift left, from=1-3, to=1-2]
\end{tikzcd}
\] 
we associate the strong external action $f,g \colon B \times X \rightarrow X$ defined by 
\[
f_{b}(x)=s(b)\ra (s(b)\cdot x) \; \text{ and } \; g_{b}(x)=s(b)\ra x.
\]
One may check that $\tau_B$ is a bijection whose inverse is the map $\mu_B$ which sends any strong external action $f,g \colon B \times X \rightarrow X$ to the split extension \eqref{splext} of \Cref{strong_to_split}.
\end{proof}

\begin{example}
Let $B$ and $X$ be hoops, and consider their direct product $A=X \times B$. The strong external action associated with the split extension described in \Cref{ex_direct} is given by the pair of maps \begin{equation*}
f,g \colon B \times X \rightarrow X
\end{equation*}
defined by
\[
f_b(x) = g_b(x) = x,
\]
for any $b \in B$ and $x \in X$.
\end{example}

\begin{example}
Let $A$ be a BL-algebra. The strong external action associated with the split extension of \Cref{ex_Rump} is given by the pair of maps
\begin{equation*}\label{strong_ex}
f,g \colon \operatorname{MV}(A) \times \operatorname{D}(A) \rightarrow \operatorname{D}(A)
\end{equation*}
defined by
\[
f_b(x)=b \ra (b \cd x) \; \text{ and } \; g_b(x)=b \ra x,
\]
for any $b \in \operatorname{MV}(A)$ and $x \in \operatorname{D}(A)$.
\end{example}

\begin{remark}\label{operates}
Let $f,g \colon B \times X \rightarrow X$ be a strong external action of hoops. It is possible to check that the map $g$ satisfies the following additional properties:
\begin{itemize}
\item[(i)] $g_b$ is monotone for any $b \in B$, i.e., if $x \leq y$, then $g_{b}(x)\leq g_{b}(y)$;
\item[(ii)] $g_{b \cdot b'}=g_b \circ g_{b'}$, for any $b,b' \in B$;
\item[(iii)] $g_{b}(x \ra y)=g_b(x) \ra g_{b}(y)$, for any $b \in B$ and $x,y \in X$.
\end{itemize}
These directly follow from the fact that the set
\[
Y'=\{(x,b) \in X \times B \mid f_{b}(x)=x\}
\]
endowed with the binary operations \eqref{op1} and \eqref{op2} is a hoop. 
\end{remark}

Actually, it is possible to prove that the bijection $\tau_B$ of \Cref{prop} extends to a natural isomorphism of functors.

\begin{theorem}
There is a natural isomorphism
\[
\tau \colon \spl(-,X) \cong \EAct(-,X).
\]
\end{theorem}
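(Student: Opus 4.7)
The plan is to establish naturality, given that the bijection $\tau_B \colon \spl(B,X) \to \EAct(B,X)$ has already been constructed in \Cref{prop}. Concretely, for every morphism $\phi \colon B' \to B$ in $\Hp$, I must verify the commutativity
\[
\EAct(\phi,X) \circ \tau_B \;=\; \tau_{B'} \circ \phi^{*}.
\]

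I would start with a representative split extension with strong section
\[
\begin{tikzcd}
X \ar[r, "k"] & A \ar[r, shift left, "p"] & B \ar[l, shift left, "s"]
\end{tikzcd}
\]
and unfold $\phi^{*}$ as the pullback of $p$ along $\phi$. Explicitly, the pulled-back extension has object $A' = \{(a,b') \in A \times B' : p(a) = \phi(b')\}$, projection $p'$ the second coordinate, section $s'(b') = (s\phi(b'), b')$, and kernel comparison $k' \colon X \to A'$ given by $k'(x) = (k(x), 1)$. Since operations in $A'$ are inherited componentwise from the product $A \times B'$ and since $sp(a) = s\phi(b')$ whenever $(a,b') \in A'$, the strong section identity for $s$ transfers to $s'$: for any $(a,b') \in A'$ and $b'' \in B'$,
\[
(a,b') \to s'(b'') = (sp(a) \to s\phi(b''),\, b' \to b'') = s'p'(a,b') \to s'(b''),
\]
so the pullback is indeed a split extension with strong section.

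It then remains to apply $\tau_{B'}$ and compare. For $b' \in B'$ and $x \in X$, a componentwise computation gives
\[
s'(b') \cdot k'(x) = (s\phi(b') \cdot k(x),\, b'), \qquad s'(b') \to k'(x) = (s\phi(b') \to k(x),\, 1),
\]
and hence $s'(b') \to (s'(b') \cdot k'(x)) = (s\phi(b') \to (s\phi(b') \cdot k(x)),\, 1)$. Under the identification $\Ker p' \cong X$ via $k'$, these are respectively $g_{\phi(b')}(x)$ and $f_{\phi(b')}(x)$, where $(f,g) = \tau_B$ applied to the original extension. This is precisely $\EAct(\phi,X)(f,g)$, which gives the desired naturality.

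No serious obstacle is expected: the entire argument is bookkeeping. The only algebraic input is the strong section property of $s$, used only to confirm that $s'$ is strong; the verification of the external action equations then reduces to the two componentwise identities displayed above.
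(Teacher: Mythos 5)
Your proof is correct and follows essentially the same route as the paper: both arguments reduce naturality to the observation that pulling back along $\phi$ and then applying $\tau_{B'}$ yields exactly the action $(b',x)\mapsto (f_{\phi(b')}(x), g_{\phi(b')}(x))$, i.e.\ $\EAct(\phi,X)(f,g)$. The only difference is that you spell out the pullback construction explicitly and verify that the strong-section property survives base change — a point the paper takes for granted when defining the functor $\spl(-,X)$ — which is harmless extra care rather than a divergence in method.
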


\begin{proof}
The bijection $\tau_B$ of \Cref{prop} is natural in $B$, that is, for any morphism $\varphi \colon B' \ra B$ in $\Hp$ the diagram 
\[
\begin{tikzcd}
\spl(B,X) & \EAct(B,X) \\
\spl(B',X) & \EAct(B',X)
\arrow["\tau_B", from=1-1, to=1-2]
\arrow["{\varphi^{*}}"', from=1-1, to=2-1]
\arrow["{\EAct(\varphi,X)}", from=1-2, to=2-2]
\arrow["\tau_{B'}", from=2-1, to=2-2]
\end{tikzcd}
\]
commutes. Indeed, both the compositions $\EAct(\varphi,X) \circ \tau_B$ and $\tau_{B'} \circ \varphi^{*}$ sends a split extension with strong section
\[
\label{splext2}
\begin{tikzcd}
X & Y' & B
\arrow["{\iota_{1}}", from=1-1, to=1-2]
\arrow["{\pi_{2}}", shift left, from=1-2, to=1-3]
\arrow["{\iota_{2}}", shift left, from=1-3, to=1-2]
\end{tikzcd}
\]
to the strong external action $f',g' \colon B' \times X \ra X$ defined by
\[
f'(b',x)=f(\varphi(b'),x) \; \text{ and } g'(b',x)=g(\varphi(b'),x),
\]
for any $b' \in B'$ and $x \in X$.
\end{proof}

The results obtained above can be specialized to the subvarieties of basic hoops, Wajsberg hoops,  Gödel hoops and product hoops. Here we do not provide a detailed description for product hoops, since strong external actions in the variety $\PH$ were already described in~\cite{strongsect}.

\subsection{Strong external actions of basic hoops}

We start by showing how the lattice operations of \Cref{lattice_strongsect} in the semidirect product $X\rtimes_{\xi} B$ may be described in terms of the external action associated with $\xi$.

\begin{proposition}
Let $B,X$ be basic hoops, let $f,g \colon B \times X \rightarrow X$ be a strong external action and let
\begin{equation*}
\begin{tikzcd}
X & Y' & B
\arrow["{\iota_{1}}", from=1-1, to=1-2]
\arrow["{\pi_{2}}", shift left, from=1-2, to=1-3]
\arrow["{\iota_{2}}", shift left, from=1-3, to=1-2]
\end{tikzcd}
\end{equation*}
be the corresponding split extension. Then the lattice operations of join and meet in $Y'$ can be described as follows:
\[
(x,b)\wedge (y,b)=(f_{b \wedge b'}(x \wedge y), \ b \wedge b')
\]
and
\[
(x,b)\vee (y,b')=(f_{b \vee b'}((g_{b'\ra b}(x \ra y) \ra y)\wedge (g_{b \ra b'}(y \ra x)\ra x)), \ b \vee b'),
\]
for any $(x,b),(y,b') \in Y'$.
\end{proposition}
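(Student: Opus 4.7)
My plan is to derive the statement directly from Proposition~\ref{lattice_strongsect} by translating the expressions written in terms of the section $s$ into expressions written in terms of the external action maps $f$ and $g$. The key observation is that the split extension produced by the action $(f,g)$ via Proposition~\ref{split_to_strong} has strong section $s=\iota_2$, and since $B,X\in\BH$ the hoop $Y'$ also lies in $\BH$, so Proposition~\ref{lattice_strongsect} applies.

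A short computation using formulas~\eqref{op1} and~\eqref{op2}, together with the identification of $x\in X$ with $\iota_1(x)=(x,1)\in Y'$, shows that for every $x\in X$ and $b,c\in B$ one has
\[
s(b)\cdot x = (f_b(x),\,b) \quad\text{and}\quad s(c)\ra x = (g_c(x),\,1),
\]
and hence, under the same identification, $s(b)\ra(s(b)\cdot x)=f_b(x)$ and $s(c)\ra x=g_c(x)$ in $X$. (These are precisely the equalities defining the strong external action attached to a split extension in Proposition~\ref{strong_to_split}.)

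Substituting these identities into the meet formula of Proposition~\ref{lattice_strongsect} immediately gives
\[
(x,b)\wedge(y,b') = \bigl(s(b\wedge b')\ra(s(b\wedge b')\cdot(x\wedge y)),\ b\wedge b'\bigr) = \bigl(f_{b\wedge b'}(x\wedge y),\ b\wedge b'\bigr).
\]
For the join, I would first rewrite $s(b'\ra b)\ra(x\ra y) = g_{b'\ra b}(x\ra y)$ and $s(b\ra b')\ra(y\ra x) = g_{b\ra b'}(y\ra x)$ inside the meet appearing in the first coordinate of the join formula of Proposition~\ref{lattice_strongsect}, and then replace the outer $s(b\vee b')\ra(s(b\vee b')\cdot(-))$ with $f_{b\vee b'}(-)$, obtaining the stated expression.

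The whole argument amounts to a substitution, so I do not anticipate a serious obstacle; the only point that deserves care is the coherence of the identification of elements of $X$ with their images in $Y'$, which is guaranteed by the strong section property. As an alternative, more hands-on route, one could compute $(x,b)\cdot((x,b)\ra(y,b'))$ directly from~\eqref{op1}--\eqref{op2} and simplify via Axiom E3 applied with $b_1=b'\ra b$ and $b_2=b'$ (using $b'\cdot(b'\ra b)=b\wedge b'=x\cdot(x\ra y)$) to obtain the meet; then, using $b'\le b\ra b'$ and hence $g_{b'\ra(b\ra b')}=\id_X$, compute $((x,b)\ra(y,b'))\ra(y,b')$ and symmetrically for the other factor, and conclude the join formula by applying the meet formula just established.
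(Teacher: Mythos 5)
Your primary route is genuinely different from the paper's. The paper proves this proposition by direct computation inside $Y'$: it expands $(x,b)\cd((x,b)\ra(y,b'))$ and $(((x,b)\ra(y,b'))\ra(y,b'))\wedge(((y,b')\ra(x,b))\ra(x,b))$ via \eqref{op1}--\eqref{op2}, simplifies the meet with Axiom E3 (after rewriting the subscript $b\cd(b\ra b')$ as $b'\cd(b'\ra b)$), and for the join uses $g_{b'\ra(b\ra b')}=g_{1}=\id_X$ and then the meet formula just established --- which is exactly the ``hands-on'' alternative you sketch in your last sentence. Your substitution argument via \Cref{lattice_strongsect} is a legitimate shortcut, and the identities $s(b)\cd\iota_1(x)=(f_b(x),b)$ and $s(c)\ra\iota_1(x)=\iota_1(g_c(x))$ you verify are precisely the compatibility $\tau_B\circ\mu_B=\id$ that the paper leaves to the reader; what it buys is that all the hoop-term manipulation is delegated to \Cref{lattice_strongsect} instead of being redone. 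The one step you assert too quickly is that ``$Y'$ also lies in $\BH$'': at this point in the paper that is \emph{not} automatic for a bare strong external action of hoops between basic hoops --- it is established only later (\Cref{prop2}) under the additional Axiom B2 --- and \Cref{lattice_strongsect} is stated for split extensions in $\BH$, so your primary route formally needs either that extra hypothesis or the remark that both sides are hoop terms whose equality can be verified without knowing that they compute an actual lattice join. The direct computation sidesteps this issue entirely, which is presumably why the paper takes that route. (A minor slip: in your E3 step the chain ``$b'\cd(b'\ra b)=b\wedge b'=x\cd(x\ra y)$'' conflates two separate identities, one in $B$ for the subscript of $f$ and one in $X$ for its argument.)
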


\begin{proof}
Straightforward computations show that
\begin{align*}
(x,b)\wedge (y,b')&=(x,b)\cd ((x,b)\ra (y,b'))\\
&=(x,b)\cd (g_{b' \ra b}(x \ra y), \ b \ra b')\\
&=(f_{b' \cd (b' \ra b)}(x \cd g_{b' \ra b}(x \ra y)), \ b \cd ( b \ra b'))\\
&\overset{\text{E3}}{=}(f_{b' \cd (b' \ra b)}(x \cd (x \ra y)), \ b \wedge b')\\
&=(f_{b \wedge b'}(x \wedge y), \ b \wedge b')
\end{align*}
and
\begin{align*}
(((x, b)\ra &(y, b'))\ra (y,b'))\wedge (((y,b')\ra (x,b))\ra (x,b))\\
&=((g_{b' \ra b}(x \ra y), \ b \ra b')\ra (y,b'))\wedge ((g_{b \ra b'}(y \ra x), \ b' \ra b) \ra (x,b))\\
&=(g_{b' \ra (b \ra b')}(g_{b' \ra b}(x \ra y)\ra y), \ (b \ra b')\ra b') \\ &\wedge (g_{b \ra (b' \ra b)}(g_{b \ra b'}(y \ra x)\ra x), \ (b' \ra b)\ra b)\\
&=(g_{1}(g_{b' \ra b}(x \ra y)\ra y), \ (b \ra b')\ra b') \\ &\wedge (g_{1}(g_{b \ra b'}(y \ra x)\ra x), \ (b' \ra b)\ra b)\\
&\overset{\text{E2}}{=}(g_{b' \ra b}(x \ra y) \ra y, \ (b \ra b')\ra b') \wedge (g_{b \ra b'}(y \ra x)\ra x, \ (b' \ra b) \ra b)\\
&=(f_{((b \ra b')\ra b')\wedge ((b' \ra b)\ra b)}((g_{b' \ra b}(x \ra y) \ra y)\wedge (g_{b \ra b'}(y \ra x)\ra x)),\\ & \ ((b \ra b')\ra b') \wedge ((b' \ra b) \ra b)\\
&=(f_{b \vee b'}((g_{b' \ra b}(x \ra y) \ra y) \wedge (g_{b \ra b'}(y \ra x) \ra x)), \ b \vee b'),
\end{align*}
for any $(x,b),(y,b') \in Y$, where we used that $b \ra (b' \ra b)=b' \ra (b \ra b')=1$. 
\end{proof}

\begin{definition}\label{def_basic}
Let $B,X$ be basic hoops. A \emph{strong external action} of $B$ on $X$ in the variety $\BH$ consists of a pair of maps
\begin{align*}
&f \colon B \times X \rightarrow X \colon (b,x) \mapsto f_b(x),\\
&g \colon B \times X \rightarrow X \colon (b,x) \mapsto g_b(x)
\end{align*}
such that
\begin{enumerate}
\item[B1.] $(f,g)$ is a strong external action of hoops;
\item[B2.] For any $b_1,b_2,b_3 \in B$ and $x,y,z \in X$
\begin{align*}
g_{\tilde{b}}(g_{b_{3}\ra (b_1 \ra b_2)}(g_{b_2 \ra b_1}&(x \ra y)\ra z) \\ \ra (g&_{b_{3}\ra (b_{2}\ra b_{1})}(g_{b_{1}\ra b_{2}}(y \ra x)\ra z)\ra z))=1, 
\end{align*}
where
\[
\tilde{b}=(((b_{2}\ra b_{1})\ra b_{3})\ra b_{3})\ra ((b_{1}\ra b_{2})\ra b_{3}).
\]
\end{enumerate}
\end{definition}

Given any basic hoop $X$, consider the functor
\[
\operatorname{EAct_{ss}^B}(-,X) \colon \BH^{\op}\ra \ensuremath{\mathbf{Set}}
\]
which maps every basic hoop $B$ to the set $\operatorname{EAct_{ss}^{B}}(B,X)$ of strong external actions of $B$ on $X$ in the variety $\BH$ and
\[
\splb(-,X) \colon  \BH^{\op} \ra \ensuremath{\mathbf{Set}}
\]
which assigns to any basic hoop $B$, the set $\splb(B,X)$ of isomorphism classes of split extensions with strong section of $B$ by $X$ in $\BH$. As in the case of hoops, one may prove the following.

\begin{proposition}\label{prop2}
Let $B,X$ be basic hoops. There is a bijection $\tau_B$ between $\splb(B,X)$ and $\operatorname{EAct_{ss}^B}(B,X)$.
\end{proposition}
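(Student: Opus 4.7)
The plan is to restrict the bijection $\tau_B$ of \Cref{prop} along the inclusion $\BH\hookrightarrow\Hp$. Since $\tau_B$ and its inverse $\mu_B$ act at the level of underlying sets, it suffices to verify that under $\mu_B$ the basic hoop identity on $Y'$ corresponds precisely to axiom B2 on the pair $(f,g)$.

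First I would compute, for arbitrary $(x,b_1),(y,b_2),(z,b_3)\in Y'$, the value in $Y'$ of the basic hoop expression
\[
E = (((x,b_1)\ra (y,b_2))\ra (z,b_3))\ra ((((y,b_2)\ra (x,b_1))\ra (z,b_3))\ra (z,b_3)).
\]
Four nested applications of \eqref{op1}, combined with the multiplicativity property $g_{b\cd b'}=g_b\circ g_{b'}$ from \Cref{operates}, show that $E$ is a pair whose second coordinate is the basic hoop identity in $B$ applied to $(b_1,b_2,b_3)$, and whose first coordinate is exactly the left-hand side of B2, with $\tilde{b}$ as declared in the definition.

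Once this identification is in hand, both directions of the bijection follow. If the split extension lies in $\BH$, then $Y'\cong A$ is a basic hoop, hence $E=(1,1)$; substituting $f_b(x)=s(b)\ra (s(b)\cd x)$ and $g_b(x)=s(b)\ra x$ and using the strong section property \eqref{ss} yields B2 for $(f,g)$ at all $b_i\in B$ and $x,y,z\in X$. Conversely, if $(f,g)$ satisfies B1 and B2, the second coordinate of $E$ is $1$ because $B$ is a basic hoop, and the first coordinate is $1$ by B2; so $Y'$ is a basic hoop.

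The main obstacle is the iterated expansion of $E$: each of the four rewrite steps produces a nested implication inside $g$, and matching the result to B2 requires careful bookkeeping together with the properties of $g$ recorded in \Cref{operates}. The calculation is routine but lengthy, mirroring the verification of axioms E3 and E4 in the proof of \Cref{strong_to_split}.
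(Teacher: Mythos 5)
Your proposal is correct and follows essentially the same route as the paper: the paper's proof likewise expands the basic hoop identity on $Y'$ via repeated application of \eqref{op1}, observes that $g_{b_{3}\ra ((b_{2}\ra b_{1})\ra b_{3})}=g_{1}=\id_X$, and identifies the first coordinate with the left-hand side of B2 and the second with the basic hoop identity in $B$. The only difference is organizational --- you use this single computation of $E$ symmetrically for both directions, whereas the paper carries it out explicitly only for $\mu_B$ and leaves the verification that $\tau_B$ lands in $\operatorname{EAct_{ss}^B}(B,X)$ to the reader; also note that the simplification needed in the expansion is $g_1=\id_X$ (axiom E2) rather than the multiplicativity of $g$.
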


\begin{proof}
Let $\tau_B$ be the function that maps any split extension with strong section
\[\begin{tikzcd}
X & A & B,
\arrow["k", from=1-1, to=1-2]
\arrow["p", shift left, from=1-2, to=1-3]
\arrow["s", shift left, from=1-3, to=1-2]
\end{tikzcd}\] 
in the variety $\BH$ to the pair of maps $f,g \colon B \times X \rightarrow X$ defined by 
\[
f_{b}(x)=s(b)\ra (s(b)\cdot x) \; \text{ and } \; g_{b}(x)=s(b)\ra x.
\]
It is possible to verify that the pair $(f,g)$ defines a strong external action of $B$ on~$X$ in the variety of basic hoops.

Now, consider the map $\mu_B$ which sends a strong external action $f,g \colon B \times X \rightarrow X$ in $\BH$ to the split extension \eqref{splext} of \Cref{strong_to_split}. We already kwow that $Y'$ is a hoop and that the split extension \eqref{splext} has strong section. It remains to show that $Y'$ is a basic hoop. In fact, we have
\begin{align*}
(((x,b_1)&\ra (y,b_2))\ra (z,b_3))\ra ((((y,b_2)\ra (x,b_1))\ra (z,b_3))\ra (z,b_3))\\
&=((g_{b_{2}\ra b_{1}}(x \ra y), b_{1}\ra b_{2})\ra (z,b_3))\ra (((g_{b_{1}\ra b_{2}}(y \ra x), \ b_{2}\ra b_{1}) \ra (z,b_{3}))\ra (z,{b_{3}}))\\
&=(g_{b_{3}\ra (b_1 \ra b_2)}(g_{b_2 \ra b_1}(x \ra y)\ra z),(b_1 \ra b_2)\ra b_3)\\&\ra ((g_{b_{3}\ra (b_{2}\ra b_{1})}(g_{b_{1}\ra b_{2}}(y \ra x)\ra z), \ (b_{2}\ra b_{1})\ra b_{3})\ra (z, b_{3}))\\
&=(g_{b_{3}\ra (b_1 \ra b_2)}(g_{b_2 \ra b_1}(x \ra y)\ra z), \ (b_1 \ra b_2)\ra b_3)\\&\ra
(g_{b_{3}\ra ((b_{2}\ra b_{1})\ra b_{3})}(g_{b_{3}\ra (b_{2}\ra b_{1})}(g_{b_{1}\ra b_{2}}(y \ra x)\ra z)\ra z), \ ((b_{2}\ra b_{1})\ra b_{3})\ra b_{3})\\
&=(g_{(((b_{2}\ra b_{1})\ra b_{3})\ra b_{3})\ra ((b_{1}\ra b_{2})\ra b_{3})}(g_{b_{3}\ra (b_1 \ra b_2)}(g_{b_2 \ra b_1}(x \ra y)\ra z)\\ &\ra g_{b_{3}\ra ((b_{2}\ra b_{1})\ra b_{3})}(g_{b_{3}\ra (b_{2}\ra b_{1})}(g_{b_{1}\ra b_{2}}(y \ra x)\ra z)\ra z)),\\
&((b_{1}\ra b_{2})\ra b_{3})\ra (((b_{2}\ra b_{1})\ra b_{3})\ra b_{3}))\\
&\overset{\text{E2}}{=}(g_{\tilde{b}}(g_{b_{3}\ra (b_1 \ra b_2)}(g_{b_2 \ra b_1}(x \ra y)\ra z)\\ &\ra (g_{b_{3}\ra (b_{2}\ra b_{1})}(g_{b_{1}\ra b_{2}}(y \ra x)\ra z)\ra z)), \ 1)\overset{\text{B2}}{=}(1,1),
\end{align*}
for any $(x,b_1),(y,b_2),(z,b_3) \in Y'$, where $\tilde{b}$ is as in \Cref{def_basic}, and we used the identity
\[
b_{3}\ra ((b_{2}\ra b_{1})\ra b_{3})=(b_{2}\ra b_{1})\ra (b_{3}\ra b_{3})=1
\]
and the fact that $B$ is a basic hoop. To conclude the proof, one may check that $\mu_B$ is the inverse of the map~$\tau_B$.
\end{proof}

Since, again, the bijection $\tau_B$ is natural in $B$, we get the following.

\begin{theorem}
There is a natural isomorphism
\[
\tau \colon \splb(-,X) \cong \operatorname{EAct_{ss}^B}(-,X). 
\]
\noproof
\end{theorem}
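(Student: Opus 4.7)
My plan is to mimic the proof of the analogous naturality statement established in the variety of hoops just before Section~4.1, since the bijection $\tau_B$ of Proposition~4.8 is defined by the same formulas $f_b(x)=s(b)\ra (s(b)\cd x)$ and $g_b(x)=s(b)\ra x$ as in the case of hoops. The task is to check that, for every morphism $\varphi \colon B' \ra B$ in $\BH$, the square
\[
\begin{tikzcd}
\splb(B,X) & \operatorname{EAct_{ss}^B}(B,X) \\
\splb(B',X) & \operatorname{EAct_{ss}^B}(B',X)
\arrow["\tau_B", from=1-1, to=1-2]
\arrow["{\varphi^{*}}"', from=1-1, to=2-1]
\arrow["{\operatorname{EAct_{ss}^B}(\varphi,X)}", from=1-2, to=2-2]
\arrow["\tau_{B'}", from=2-1, to=2-2]
\end{tikzcd}
\]
commutes. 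First I would fix a representative split extension $(k,p,s)$ of $B$ by $X$ with strong section, and compute the top--right composite: applying $\tau_B$ gives the strong external action $(f,g)$ above, and then $\operatorname{EAct_{ss}^B}(\varphi,X)$ precomposes with $\varphi$, producing the pair $f'_{b'}(x)=s(\varphi(b'))\ra (s(\varphi(b'))\cd x)$ and $g'_{b'}(x)=s(\varphi(b'))\ra x$ on $B'\times X$.

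Second, I would compute the left--bottom composite. Pulling back $p$ along $\varphi$ yields a split extension with total space $A'=A\times_B B'$, kernel $k'(x)=(k(x),1)$, projection $p'(a,b')=b'$ and section $s'(b')=(s(\varphi(b')),b')$. Since the pullback inherits componentwise operations from $A\times B'$, the strong section property of $s$ transfers at once: for $(a,b')\in A'$ and $b''\in B'$,
\[
(a,b')\ra s'(b'')=(a\ra s(\varphi(b'')),\,b'\ra b'')=(sp(a)\ra s(\varphi(b'')),\,b'\ra b'')=s'p'(a,b')\ra s'(b''),
\]
where the middle equality uses the strong section of $s$. Applying $\tau_{B'}$ to this pullback and identifying the kernel of $p'$ with $X$ via $k'$, the formula $g''_{b'}(x)=s'(b')\ra k'(x)=(s(\varphi(b'))\ra k(x),\,1)$ is identified with $g_{\varphi(b')}(x)$, and similarly for $f''_{b'}(x)$; hence $(f'',g'')=(f',g')$.

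The step that deserves genuine attention is the third one: namely, checking that pulling back along $\varphi$ truly produces a split extension of $B'$ by $X$ with strong section in $\BH$ (not merely in $\Hp$). But this is automatic, because $\BH$ is a subvariety of $\Hp$ closed under limits, so $A'$ lies in $\BH$, and the verification of the strong section property above is independent of the defining identities of $\BH$. The same remark shows that the analogous naturality statements for $\WH$, $\GH$ and $\PH$ follow by the same argument, once the corresponding bijections $\tau_B$ have been established.
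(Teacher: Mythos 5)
Your argument is correct and follows essentially the same route as the paper, which proves the hoop case by checking that both composites in the naturality square send a split extension to the action $(f\circ(\varphi\times\id),\,g\circ(\varphi\times\id))$, and then marks the $\BH$ version as immediate. You merely make explicit the pullback computation and the (automatic) preservation of the strong-section identity and of the subvariety $\BH$ under pullback, which the paper leaves implicit.
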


\subsection{Strong external actions of Wajsberg hoops}

As in the case of basic hoops, strong external actions in the variety $\WH$ can be defined as external actions of hoops satisfying an additional identity.

\begin{definition}
Let $B,X$ be Wajsberg hoops. A \emph{strong external action} of $B$ on~$X$ in the variety $\WH$ consists of a pair of maps
\begin{align*}
&f \colon B \times X \rightarrow X \colon (b,x) \mapsto f_b(x),\\
&g \colon B \times X \rightarrow X \colon (b,x) \mapsto g_b(x)
\end{align*}
such that
\begin{itemize}
\item[W1.] $(f,g)$ is a strong external action of hoops;
\item[W2.] $g_{b_{2}\ra b_{1}}(x \ra y)\ra y=g_{b_{1}\ra b_{2}}(y \ra x)\ra x$, for any $b_{1},b_{2} \in B$ and $x,y \in X$.
\end{itemize}
\end{definition}

\begin{remark}
Altough it is not required in the definition, one may check that the pair $(f,g)$ defines a strong external action in the variety of basic hoops.
\end{remark}

As in the previous sections, we define the functors 
\[
\operatorname{EAct_{ss}^W}(-,X) \colon \WH^{\op}\ra \ensuremath{\mathbf{Set}}
\]
which maps every Wajsberg hoop $B$ to the set $\operatorname{EAct_{ss}^W}(B,X)$ of strong external actions of $B$ on $X$, and
\[
\splw(-,X) \colon  \WH^{\op} \ra \ensuremath{\mathbf{Set}}
\]
which sends $B$ to the set $\splw(B,X)$ of isomorphism classes of split extensions with strong section of $B$ by $X$ in $\WH$.

The results of \Cref{prop} and \Cref{prop2} extend to the variety $\WH$.

\begin{proposition}
Let $B,X$ be Wajsberg hoops. There is a bijection $\tau_B$ between $\splw(B,X)$ and $\operatorname{EAct_{ss}^W}(B,X)$.
\end{proposition}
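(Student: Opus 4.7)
The plan is to adapt the proofs of \Cref{prop} and \Cref{prop2} to the Wajsberg setting, supplying the Wajsberg-specific checks at the two points where they enter.

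First, I would define $\tau_B$ exactly as in the previous cases: a split extension with strong section
\[
\begin{tikzcd}
X & A & B
\arrow["k", from=1-1, to=1-2]
\arrow["p", shift left, from=1-2, to=1-3]
\arrow["s", shift left, from=1-3, to=1-2]
\end{tikzcd}
\]
in $\WH$ is sent to the pair $(f,g)$ defined by $f_b(x)=s(b)\ra (s(b)\cd x)$ and $g_b(x)=s(b)\ra x$. Axiom W1 holds by \Cref{strong_to_split}. For W2, I would apply the Wajsberg identity $(a\ra a')\ra a' = (a'\ra a)\ra a$ in $A$ to the pair $a=k(x)$, $a'=s(b)$. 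Strong section yields $k(x)\ra s(b) = sp(k(x))\ra s(b) = 1\ra s(b) = s(b)$, so the left-hand side collapses to $s(b)\ra s(b) = 1$, while the right-hand side is $(s(b)\ra k(x))\ra k(x) = g_b(x)\ra x$. Hence $g_b(x)\leq x$; combined with the automatic inequality $x \leq s(b)\ra x = g_b(x)$ (from $s(b)\cd x \leq x$ by residuation), this forces $g_b(x) = x$ for all $b\in B$ and $x\in X$. W2 therefore collapses to $(x\ra y)\ra y = (y\ra x)\ra x$, which holds because $X\in\WH$.

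Second, I would define the inverse $\mu_B$, sending $(f,g)\in\operatorname{EAct_{ss}^W}(B,X)$ to the split extension \eqref{splext} of \Cref{split_to_strong}. By the remark following the definition of strong external action of Wajsberg hoops, $(f,g)$ is already a strong external action in $\BH$, so \Cref{prop2} guarantees that $Y'$ is a basic hoop and that the extension has strong section. It remains to verify that $Y'$ satisfies the Wajsberg identity. Using the formula \eqref{op1},
\[
((x,b_1)\ra (y,b_2))\ra (y,b_2) = (g_{b_2\ra(b_1\ra b_2)}(g_{b_2\ra b_1}(x\ra y)\ra y),\ (b_1\ra b_2)\ra b_2);
\]
since $b_2\ra(b_1\ra b_2) = 1$ and $g_1 = \id_X$ by E2, the first coordinate collapses to $g_{b_2\ra b_1}(x\ra y)\ra y$. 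By the symmetric calculation, $((y,b_2)\ra (x,b_1))\ra (x,b_1)$ has first coordinate $g_{b_1\ra b_2}(y\ra x)\ra x$. The two first coordinates agree by W2, while the two second coordinates, $(b_1\ra b_2)\ra b_2$ and $(b_2\ra b_1)\ra b_1$, agree because $B$ is Wajsberg.

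Finally, that $\tau_B$ and $\mu_B$ are mutually inverse follows by the same argument as in \Cref{prop} and \Cref{prop2}. The chief subtlety is the forward direction, where the Wajsberg identity in $A$ actually trivializes the action $g$; this foreshadows the subsequent observation that strong-section split extensions trivialize in $\MV$, but formally the bijection $\tau_B$ is unaffected.
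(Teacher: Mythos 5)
Your proposal is correct and, for the direction $\mu_B$, coincides with the paper's argument: you invoke \Cref{split_to_strong} (via \Cref{prop2}) for the hoop structure and the strong section, and then verify the Wajsberg identity in $Y'$ by exactly the computation the paper performs, using $g_{b_2\ra(b_1\ra b_2)}=g_1=\id_X$ and Axiom W2 in the first coordinate and the Wajsberg identity of $B$ in the second. Where you genuinely diverge is the forward direction: the paper only asserts that $\tau_B$ lands in $\operatorname{EAct_{ss}^W}(B,X)$ (``one may verify''), whereas you supply an argument, and a rather strong one. Applying the Wajsberg identity of $A$ to the pair $k(x)$, $s(b)$ and using the strong-section collapse $k(x)\ra s(b)=s(b)$ gives $g_b(x)\le x$, which together with the automatic $x\le s(b)\ra x$ forces $g_b=\id_X$; W2 then reduces to the Wajsberg identity of $X$. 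This is sound (the kernel is a subalgebra, and $X$ is Wajsberg by hypothesis) and proves more than W2: it shows that the $g$-part of any strong external action arising from a split extension with strong section in $\WH$ is necessarily trivial. That observation is consistent with, and sharpens, \Cref{rem_MV} and the open problem in the conclusions about nontrivial examples in $\WH$; a brute-force expansion of $(s(b_2\ra b_1)\ra(x\ra y))\ra y$ would also settle W2, but your route is shorter and more informative. The remaining step, that $\tau_B$ and $\mu_B$ are mutually inverse, is indeed inherited from \Cref{prop} as you say.
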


\begin{proof}
Let $\tau_B$ be the function that maps any split extensions with strong section
\[\begin{tikzcd}
X & A & B,
\arrow["k", from=1-1, to=1-2]
\arrow["p", shift left, from=1-2, to=1-3]
\arrow["s", shift left, from=1-3, to=1-2]
\end{tikzcd}
\] 
in $\WH$ to the strong external action $f,g \colon B \times X \rightarrow X$ defined by 
\[
f_{b}(x)=s(b)\ra (s(b)\cdot x) \; \text{ and } \; g_{b}(x)=s(b)\ra x.
\]
One may verify that $\tau_B$ is a bijection whose inverse is the map $\mu_B$ which sends any strong external action $f,g \colon B \times X \rightarrow X$ in $\WH$ to the split extension \eqref{splext} of \Cref{strong_to_split}. In fact, we already know that $Y'$ a hoop and that the split extension \eqref{splext} has strong section. Furthermore, $Y'$ is a Wajsberg hoop since
\begin{align*}
((x,b)\ra (y,b'))\ra (y,b')&=(g_{b' \ra b}(x \ra y), \ b \ra b')\ra (y,b')\\
&=(g_{b' \ra (b \ra b')}(g_{b' \ra b}(x \ra y)\ra y), \ (b \ra b')\ra b')\\
&=(g_{1}(g_{b' \ra b}(x \ra y)\ra y), \ (b \ra b')\ra b')\\
&\overset{\text{E2}}{=}(g_{b' \ra b}(x \ra y)\ra y, \ (b \ra b')\ra b')\\
&\overset{\text{W2}}{=}(g_{b \ra b'}(y \ra x)\ra x, \ (b' \ra b) \ra b)\\
&=((y,b')\ra (x,b))\ra (x,b),
\end{align*}
for any $(x,b),(y,b') \in Y'$. 
\end{proof}

\begin{theorem}
The bijection $\tau_B$ extends to a natural isomorphism
\[
\tau \colon \splw(-,X) \cong \operatorname{EAct_{ss}^W}(-,X).
\] \noproof
\end{theorem}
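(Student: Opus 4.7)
The plan is to mimic the argument used previously for $\Hp$ and $\BH$. By the preceding proposition, $\tau_B$ is a bijection for every Wajsberg hoop $B$; all that remains is the naturality of the family $\{\tau_B\}$ in its contravariant argument.

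Fix a morphism $\varphi \colon B' \ra B$ in $\WH$. The goal is to verify commutativity of the square
\[
\begin{tikzcd}
\splw(B,X) & \operatorname{EAct_{ss}^W}(B,X) \\
\splw(B',X) & \operatorname{EAct_{ss}^W}(B',X)
\arrow["\tau_B", from=1-1, to=1-2]
\arrow["{\varphi^{*}}"', from=1-1, to=2-1]
\arrow["{\operatorname{EAct_{ss}^W}(\varphi,X)}", from=1-2, to=2-2]
\arrow["\tau_{B'}", from=2-1, to=2-2]
\end{tikzcd}
\]
on an arbitrary representative. Starting with a split extension with strong section $X \overset{k}{\longrightarrow} A \overset{p}{\longrightarrow} B$ with section $s$, travelling right-then-down produces the pair $f'_{b'}(x) = s(\varphi(b'))\ra(s(\varphi(b'))\cd x)$ and $g'_{b'}(x) = s(\varphi(b'))\ra x$, obtained by precomposing $\tau_B$(of the extension) with $\varphi$ on the first coordinate, as prescribed by $\operatorname{EAct_{ss}^W}(\varphi,X)$. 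Travelling down-then-right, $\varphi^{*}$ computes the pullback extension with $A' = A \times_B B'$ and section $s'(b') = (s\varphi(b'),\, b')$; since $\tau_{B'}$ is built from $s'$ via the same formulas, it yields exactly the same pair $(f',g')$.

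The only technicalities are to observe that (i) pullbacks in $\WH$ are computed componentwise, so $A'$ lies in $\WH$, and (ii) the strong section property $a \ra s(b) = sp(a) \ra s(b)$ in $A$ transports to the analogous condition for $s'$ in $A'$, which is immediate from the definition of the pullback together with $p'(a',b') = b'$ and $s'(b') = (s\varphi(b'),b')$. These two checks are identical to the ones already performed in the hoop and basic hoop cases, so no new work is required; the Wajsberg identity $W2$ plays no role in the naturality argument and is used only to ensure that each $\tau_B$ is a bijection. Thus I expect the main obstacle to be merely cosmetic bookkeeping around the pullback, and the statement to follow at once from combining the preceding proposition with this routine verification.
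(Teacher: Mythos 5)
Your argument is correct and is essentially the paper's own: the paper omits the proof precisely because naturality is verified exactly as in the $\Hp$ case, by checking that both composites around the square send a split extension with section $s$ to the pair $f'_{b'}(x)=f_{\varphi(b')}(x)$, $g'_{b'}(x)=g_{\varphi(b')}(x)$, with the pullback section $s'(b')=(s\varphi(b'),b')$ inheriting the strong section property. Your observation that the Wajsberg identity W2 enters only in establishing the bijection, not the naturality, is also accurate.
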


The following remark shows that the notion of split extension with strong section trivializes in the case of bounded Wajsberg hoops.
 
\begin{remark}\label{rem_MV}
Let 
\[
\begin{tikzcd}
X & A & B
\arrow["k", from=1-1, to=1-2]
\arrow["p", shift left, from=1-2, to=1-3]
\arrow["s", shift left, from=1-3, to=1-2]
\end{tikzcd}
\] 
be a split extension with strong section in the variety $\WH$, and suppose that $s$ is a morphism of bounded hoops. In particular, $s(0)$ is the bottom element of $A$. Then $p$ is an isomorphism with inverse $s$. 

Indeed, for any $a \in A$, the strong section condition
\[
a \ra s(0)=sp(a) \ra s(0),
\]
implies
\[
(a \ra s(0))\ra s(0)=(sp(a)\ra s(0))\ra s(0).
\]
Since $A$ is a Wajsberg hoop, the equality above can be written as
\[
(s(0) \ra a) \ra a=(s(0) \ra sp(a)) \ra sp(a)
\]
Finally, since $s(0)$ is the bottom element of $A$, we have $s(0) \ra a = 1$ and $s(0) \ra s(p(a)) = 1$, so the previous equality reduces to
\[
a = s(p(a)).
\]
Hence $s \circ p = \id_A$. Since $p \circ s = \id_B$, it follows that $p$ is an isomorphism with inverse $s$.
\end{remark}

\subsection{Strong external actions of Gödel hoops} 

Let $B$ and $X$ be Gödel hoops. In this section, we prove that any strong external action of $B$ on $X$ in the variety of basic hoops gives rise to an action in the variety $\GH$. 

\begin{proposition}
Let $B$ and $X$ be Gödel hoops and let 
\begin{equation}\label{eq_splt_godel}
\begin{tikzcd}
X & A & B,
\arrow["k", from=1-1, to=1-2]
\arrow["p", shift left, from=1-2, to=1-3]
\arrow["s", shift left, from=1-3, to=1-2]
\end{tikzcd}   
\end{equation}
be a split extension with strong section in the variety of basic hoops. Then $A$ is a Gödel hoop.
\end{proposition}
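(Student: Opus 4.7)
The plan is to show directly that every element $a \in A$ satisfies $a \cdot a = a$; since $A$ is already a basic hoop (being an object of the variety $\BH$ in which the given split extension lives), this extra identity upgrades $A$ to a Gödel hoop.

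The first step is to exploit the strong section property in order to obtain a convenient decomposition of an arbitrary $a \in A$. Taking $b = p(a)$ in the defining equation \eqref{ss} for strong section yields $a \ra sp(a) = sp(a) \ra sp(a) = 1$. Combining this with the divisibility axiom (iii) of hoops, I get
\[
a = a \cdot (a \ra sp(a)) = sp(a) \cdot (sp(a) \ra a).
\]
Setting $x \coloneqq sp(a) \ra a$, one checks that $p(x) = p(a) \ra p(a) = 1$, so $x \in \Ker p = X$. Hence every $a \in A$ factorises as $a = sp(a) \cdot x$ with $x \in X$ and $b = p(a) \in B$.

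Using that $s$ is a hoop morphism, together with commutativity and associativity of the monoid operation in $A$ and the idempotency identities $p(a) \cdot p(a) = p(a)$ (which holds in $B$) and $x \cdot x = x$ (which holds in $X$), I would then compute
\[
a \cdot a = sp(a) \cdot x \cdot sp(a) \cdot x = s(p(a) \cdot p(a)) \cdot (x \cdot x) = sp(a) \cdot x = a,
\]
establishing idempotency of the monoid operation on $A$ and thus concluding the proof. There is no serious obstacle here: the essential content is that strong section together with divisibility forces the decomposition $a = sp(a) \cdot x$ with $x \in X$, after which the idempotency of $B$ and $X$ transfers transparently to $A$. An alternative route would be to invoke Theorem~\ref{thss} and verify $(x,b) \cdot (x,b) = (x,b)$ in the semidirect product description, but the direct argument above is shorter.
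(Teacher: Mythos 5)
Your proof is correct. Each step checks out: the strong section condition with $b=p(a)$ gives $a\ra sp(a)=1$, divisibility then yields $a=sp(a)\cdot(sp(a)\ra a)$ with $sp(a)\ra a\in\Ker p=X$ (since $ps=1_B$), and idempotency of $A$ follows from that of $B$ and $X$ together with the fact that $s$ and $k$ are monoid homomorphisms; since $A$ already lies in $\BH$, this suffices. The paper argues instead inside the semidirect product: it takes the strong external action $(f,g)$ attached to the extension, passes to the isomorphic hoop $Y'=\{(x,b)\mid f_b(x)=x\}$, and computes $(x,b)\cdot(x,b)=(f_{b\cdot b}(x\cdot x),b\cdot b)=(f_b(x),b)=(x,b)$. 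The two arguments rest on the same decomposition — the bijection $A\cong Y'$ sends $a$ to $(sp(a)\ra a,\,p(a))$ with inverse $(x,b)\mapsto s(b)\cdot x$, which is exactly your factorisation $a=sp(a)\cdot x$ — but yours works directly in $A$ and is more self-contained, needing neither the external-action formalism nor the constraint $f_b(x)=x$ (existence of the factorisation is enough; uniqueness is irrelevant here). The paper's version has the advantage of slotting uniformly into its series of analogous verifications (Wajsberg, product) phrased in terms of $(f,g)$. Either proof is acceptable.
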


\begin{proof}
Let $(f,g)$ be the strong external action in the variety $\BH$ associated with \eqref{eq_splt_godel}, and consider the split extension with strong section
\begin{equation*}
\begin{tikzcd}
X & Y' & B
\arrow["{\iota_{1}}", from=1-1, to=1-2]
\arrow["{\pi_{2}}", shift left, from=1-2, to=1-3]
\arrow["{\iota_{2}}", shift left, from=1-3, to=1-2]
\end{tikzcd}
\end{equation*}
associated with $(f,g)$, where
\[
Y'=\{(x,b) \in X \times B \mid f_{b}(x)=x\}
\]
and the binary operations $\ra$ and $\cdot$ are defined as in \Cref{split_to_strong}. One can check that $Y'$ is a Gödel hoop, since
\begin{align*}
(x,b)\cd (x,b)&=(f_{b \cd b}(x \cd x), b \cd b)\\
&=(f_{b}(x),b)\\
&=(x,b),
\end{align*}
for any $(x,b) \in Y'$.
\end{proof}

This justifies the following definition.
    	
\begin{definition}
Let $B,X$ be Gödel hoops. A \emph{strong external actions} of $B$ on $X$ in the variety $\GH$ is a strong external actions of $B$ on $X$ in the variety of basic hoops.
\end{definition}

As a direct consequence, for any Gödel hoop $X$, the functors $\operatorname{EAct_{ss}^G}(-,X)$ and $\splg(-,X)$ can be defined as
\[
\operatorname{EAct_{ss}^G}(-,X)=\operatorname{EAct_{ss}^B}(U(-),U(X)) \; \text{ and } \; \splb = \operatorname{SplExt_{ss}^B}(U(-),U(X)),
\]
where $U \colon \GH \ra \BH$ denotes the forgetful functor, and we get the following.
 
\begin{theorem}
There is a natural isomorphism
\[
\tau \colon \splg(-,X) \cong \operatorname{EAct_{ss}^G}(-,X).
\] \noproof
\end{theorem}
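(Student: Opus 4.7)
The plan is to reduce the statement to the analogous result already established in the variety $\BH$ of basic hoops, using the fact that strong external actions in $\GH$ have been \emph{defined} as strong external actions in $\BH$ between the underlying basic hoops, and that the previous proposition shows that the split extensions constructed from such actions automatically stay inside $\GH$.

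First, I would define the component $\tau_B$ for $B\in\GH$ as follows: given a split extension with strong section
\[
\begin{tikzcd}
X & A & B
\arrow["k", from=1-1, to=1-2]
\arrow["p", shift left, from=1-2, to=1-3]
\arrow["s", shift left, from=1-3, to=1-2]
\end{tikzcd}
\]
in $\GH$, apply the forgetful functor $U\colon\GH\to\BH$ to obtain a split extension with strong section in $\BH$, and associate to it the pair of maps $f_b(x)=s(b)\ra (s(b)\cd x)$ and $g_b(x)=s(b)\ra x$, which by \Cref{prop2} is a strong external action in $\BH$, hence by definition an element of $\operatorname{EAct_{ss}^G}(B,X)$.

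Next, I would define the candidate inverse $\mu_B$ by sending a strong external action $(f,g)\in\operatorname{EAct_{ss}^G}(B,X)$ to the split extension \eqref{splext} constructed in \Cref{split_to_strong}. The point where the Gödel condition enters is precisely the previous proposition: since $X$ and $B$ satisfy $x\cd x=x$, the middle object $Y'$ satisfies
\[
(x,b)\cd(x,b)=(f_{b\cd b}(x\cd x),\,b\cd b)=(f_b(x),b)=(x,b),
\]
so $Y'$ is a Gödel hoop, and the associated split extension with strong section lives in $\GH$. The equalities $\tau_B\circ\mu_B=\id$ and $\mu_B\circ\tau_B=\id$ are then inherited directly from the basic-hoop bijection of \Cref{prop2}, since the underlying sets, operations, kernels and sections are unchanged by $U$.

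Finally, naturality in $B$ is inherited in the same way: for a morphism $\varphi\colon B'\to B$ in $\GH$, the change of base by pulling back along $\varphi$ in $\GH$ coincides with that in $\BH$ (as $U$ preserves and reflects limits of this kind), and the reparametrisation $(b',x)\mapsto (f(\varphi(b'),x),g(\varphi(b'),x))$ on the side of external actions is the one induced by $\operatorname{EAct_{ss}^B}(U\varphi,UX)$. Hence the square
\[
\begin{tikzcd}
\splg(B,X) & \operatorname{EAct_{ss}^G}(B,X) \\
\splg(B',X) & \operatorname{EAct_{ss}^G}(B',X)
\arrow["\tau_B", from=1-1, to=1-2]
\arrow["\varphi^{*}"', from=1-1, to=2-1]
\arrow["\operatorname{EAct_{ss}^G}(\varphi,X)", from=1-2, to=2-2]
\arrow["\tau_{B'}", from=2-1, to=2-2]
\end{tikzcd}
\]
commutes, yielding the required natural isomorphism. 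The only non-routine step is the verification that passing from $\BH$ to $\GH$ does not introduce new split extensions or new actions, which is exactly the content of the proposition proved just before and of the definition of $\operatorname{EAct_{ss}^G}$ and $\splg$ in terms of the forgetful functor.
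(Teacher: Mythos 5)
Your proposal is correct and follows essentially the same route as the paper: the theorem is stated there without proof precisely because $\operatorname{EAct_{ss}^G}(-,X)$ and $\splg(-,X)$ are defined as the restrictions along the forgetful functor $U\colon\GH\to\BH$ of the corresponding basic-hoop functors, and the preceding proposition guarantees that the middle object $Y'$ is automatically a Gödel hoop, so the bijection and its naturality are inherited verbatim from \Cref{prop2}. Your write-up simply makes explicit the steps the paper leaves implicit.
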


\section{Comparison with W.~Rump's semidirect product}\label{sec_rump}

We end the article by showing a connection between the strong external actions in the variety of hoops and the non-categorical notion of semidirect product introduced by W.~Rump in the not protomodular context of L-algebras (see~\cite{Rump3, Rump2}).

\begin{definition}
An \emph{L-algebra} is an algebra $L=(L, \ra, 1)$ of type $(2,0)$ satisfying
\begin{itemize}
\item[L1.] $1 \ra x=x$ and $x \ra x=x \ra 1=1$, for any $x \in L$;
\item[L2.] $(x \ra y)\ra (x \ra z)=(y \ra x)\ra (y \ra z)$, for any $x,y,z \in L$;
\item[L3.] $x \ra y=y \ra x=1 \Rightarrow x =y$, for any $x,y \in L$.
\end{itemize}
\end{definition}

The binary operation $\ra$ can be interpreted as the logical implication, while the constant $1$ can be interpreted as the logical unit.

\begin{example}
Every hoop is an L-algebra.
\end{example}

\begin{definition}\label{def_op}
Let $B,X$ be L-algebras. We say that $B$ \emph{operates} on $X$ if there is a map 
\[
B \times X \ra X \colon (b,x) \mapsto bx
\]
such that
\begin{itemize}
\item[O1.] $b(x \ra y)=b x \ra b y$;
\item[O2.] $(b \ra b' )b x=(b' \ra b)b' x$;
\item[O3.] $1x=x$,
\end{itemize}
for any $b, b' \in B$ and $x,y \in X$.    
\end{definition}

We want to show how the notion of strong external action in the variety of hoops is related with \Cref{def_op}.

\begin{proposition}
Let $B,X$ be hoops and let $(f,g)$ be a strong external action of $B$ on $X$. Then $B$ operates on $X$ via $g$.    
\end{proposition}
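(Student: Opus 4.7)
My plan is to verify the three axioms O1, O2, O3 of Rump's \Cref{def_op} directly, taking the operation $B \times X \to X$ to be $(b,x) \mapsto g_b(x)$. The axiom O3 is immediate from E2, since $g_1 = \id_X$ gives $1 \cdot x = g_1(x) = x$.

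For O1, I would simply invoke property (iii) of \Cref{operates}, which states exactly that $g_b(x \ra y) = g_b(x) \ra g_b(y)$. This property was already noted to follow from the fact that the set $Y'$ with operations \eqref{op1} and \eqref{op2} is a hoop (in particular, $\iota_2 \colon B \to Y'$ is a hoop homomorphism, and computing $\iota_2(b) \ra \iota_1(x \ra y)$ versus $(\iota_2(b) \ra \iota_1(x)) \cdot (\iota_2(b) \ra \iota_1(y))$ in $Y'$ yields the identity).

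The main step is O2, namely showing $g_{b \ra b'}(g_b(x)) = g_{b' \ra b}(g_{b'}(x))$. Here I would combine property (ii) of \Cref{operates} (namely $g_{b \cdot b'} = g_b \circ g_{b'}$) with the defining axiom (iii) of a hoop, which gives the symmetry $b \cdot (b \ra b') = b' \cdot (b' \ra b)$. Applying (ii) on each side and using commutativity of the monoid yields
\[
g_{b \ra b'}(g_b(x)) = g_{(b \ra b') \cdot b}(x) = g_{b \cdot (b \ra b')}(x) = g_{b' \cdot (b' \ra b)}(x) = g_{b' \ra b}(g_{b'}(x)),
\]
which is exactly O2. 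No serious obstacle is expected: all the work has essentially been done in \Cref{operates}, and the only genuinely hoop-theoretic input needed beyond that is the divisibility identity used to equate the two expressions for the meet $b \wedge b'$.
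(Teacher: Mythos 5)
Your proof is correct and follows essentially the same route as the paper's: O3 from E2, O1 from property (iii) of \Cref{operates}, and O2 by combining $g_{b\cdot b'}=g_b\circ g_{b'}$ with the hoop identity $b \cdot (b \ra b') = b' \cdot (b' \ra b)$. In fact your displayed chain for O2 makes explicit the intermediate step that the paper's own computation abbreviates.
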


\begin{proof}
It follows from \Cref{operates} that 
\[
g_{b}(x \ra y)=g_{b}(x)\ra g_{b}(y),
\]
for any $b \in B$ and $x,y \in X$.
Moreover, since $g_{b_{1}\cd b_{2}}=g_{b_{1}}\circ g_{b_{2}}$ for any $b,b' \in B$, we get
\begin{align*}
g_{b \ra b'}(g_{b}(x))&=g_{(b \ra b')\cd b}(x)\\
&\overset{\text{H3}}{=}g_{(b' \ra b)\cd b}(x)\\
&=g_{b' \ra b}(g_{b'}(x)).
\end{align*}
Furthermore, Axiom O3 follows from Axiom E2.    
\end{proof}

Since the category of L-algebras is not semi-abelian, it doesn't have categorical semidirect products. However, given two L-algebras $B$ and $X$ such that $B$ operates on $X$, we can still determine a corresponding notion of semidirect product.

\begin{definition}
Let $B,X$ be L-algebras such that $B$ operates on $X$. The \emph{semidirect product} $X \rtimes B$ of $B$ by $X$ is the cartesian product $X \times B$ endowed with the binary operation
\[
(x,b)\ra(y, b')=(((b \ra b')x)\ra (b' \ra b) y, \ b \ra b'),
\]
for any $b,b' \in B$ and $x,y \in X$, and with unit $1_{X \rtimes B}=(1,1)$.
\end{definition}

\begin{proposition}
Let $B,X$ be L-algebras such that $B$ operates on $X$. Then the semidirect product $X \rtimes B$ is an L-algebra. \noproof
\end{proposition}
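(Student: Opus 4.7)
The plan is to verify axioms L1, L2, L3 directly on the carrier $X \times B$ with the given operation. The axiom L1 is routine: $(x,b) \ra (x,b) = ((1 x) \ra (1 x), 1) = (x \ra x, 1) = (1,1)$ using O3; $(x,b) \ra (1,1) = ((1 x) \ra (b \cdot 1), 1) = (x \ra 1, 1) = (1,1)$, where $b \cdot 1 = 1$ is obtained by combining O1 with $x \ra x = 1$; and $(1,1) \ra (x,b) = ((b \cdot 1) \ra (1 x), b) = (1 \ra x, b) = (x,b)$. The axiom L3 reduces to itself factorwise: if $(x,b) \ra (y,b') = (1,1) = (y,b') \ra (x,b)$, the second components force $b \ra b' = b' \ra b = 1$, so $b = b'$ by L3 in $B$; substituting $b = b'$ trivializes the first components to $x \ra y = y \ra x = 1$, whence $x = y$ by L3 in $X$.

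The main effort is in L2. Fix $(x_i, b_i)$ for $i=1,2,3$ and set $c_{ij} = b_i \ra b_j$. Expanding both sides of L2 and reading the second components gives $c_{12} \ra c_{13}$ on the left and $c_{21} \ra c_{23}$ on the right, which agree by L2 in $B$. For the first components, distribute the outer action by two applications of O1, obtaining on each side an implication between four ``scalar-action'' terms. Now use O2 together with L2 in $B$ (in the forms $c_{12} \ra c_{13} = c_{21} \ra c_{23}$, $c_{13} \ra c_{12} = c_{31} \ra c_{32}$, and $c_{23} \ra c_{21} = c_{32} \ra c_{31}$) to extract the three symmetric witnesses
\begin{align*}
\alpha &= (c_{12} \ra c_{13}) c_{12} x_1 = (c_{13} \ra c_{12}) c_{13} x_1,\\
\beta  &= (c_{21} \ra c_{23}) c_{21} x_2 = (c_{23} \ra c_{21}) c_{23} x_2,\\
\gamma &= (c_{13} \ra c_{12}) c_{31} x_3 = (c_{23} \ra c_{21}) c_{32} x_3.
\end{align*}
L2 in $B$ also identifies the mixed off-diagonal terms: $(c_{12} \ra c_{13}) c_{21} x_2 = \beta$ and $(c_{21} \ra c_{23}) c_{12} x_1 = \alpha$. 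The LHS first component then reduces to $(\alpha \ra \beta) \ra (\alpha \ra \gamma)$ and the RHS first component to $(\beta \ra \alpha) \ra (\beta \ra \gamma)$, and the two coincide by L2 in $X$ applied to $\alpha, \beta, \gamma$.

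The main obstacle is the combinatorial bookkeeping in L2: with six distinct quantities $c_{ij}$ in play, it is easy to miscount symmetries. The conceptual content, however, is clean: L2 in $B$ combined with three instances of O2 conspire to produce three symmetric elements $\alpha, \beta, \gamma \in X$ which render the two sides manifestly of the form to which L2 in $X$ applies, with no further input from the operation of $B$ on $X$. In particular, O3 plays no role in L2, consistent with its sole function of normalizing the unit in L1.
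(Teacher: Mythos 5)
Your proof is correct: all three L-algebra axioms are verified soundly, and in particular the reduction of L2 via the three elements $\alpha$, $\beta$, $\gamma$ — each well defined thanks to O2 combined with the instances $c_{12}\ra c_{13}=c_{21}\ra c_{23}$, $c_{13}\ra c_{12}=c_{31}\ra c_{32}$ and $c_{23}\ra c_{21}=c_{32}\ra c_{31}$ of L2 in $B$ — brings both first components to the forms $(\alpha\ra\beta)\ra(\alpha\ra\gamma)$ and $(\beta\ra\alpha)\ra(\beta\ra\gamma)$, to which L2 in $X$ applies. The paper states this proposition without proof (it is part of W.~Rump's semidirect product construction, cited from his work), so there is no argument to compare against; your direct verification is exactly the kind of proof the omitted details would consist of.
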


\begin{definition}
We say that a split extension 
\[
\begin{tikzcd}
X & A & B
\arrow["k", from=1-1, to=1-2]
\arrow["p", shift left, from=1-2, to=1-3]
\arrow["s", shift left, from=1-3, to=1-2]
\end{tikzcd}\]
of L-algebras has a \emph{strong section}, if the equation 
\begin{equation*}
a \ra s(b)=sp(a)\ra s(b)
\end{equation*} holds for any $a \in A$ and $b \in B$.
\end{definition}

Given a split extension with strong section, it is possible to prove that $B$ operates on $X$. Indeed, the map
\[
B \times X \ra X \colon (b,x) \mapsto s(b)\ra x
\]
satisfies Axioms O1-O2-O3. Moreover, the semidirect product of $X$ and $B$ coincide with the cartesian product $X \times B$ endowed with the binary operation
\[
(x, b_{1})\ra (y, b_{2})=((s(b_{1}\ra b_{2})\ra x)\ra (s(b_{2}\ra b_{1})\ra y), \ b_{1}\ra b_{2})
\]
and with logical unit $(1,1)$.

\begin{remark}
One may check that the map
\[
A \ra X \times B \colon a \mapsto (sp(a)\ra a,p(a))
\]
is not an isomorphism of L-algebras, but just an injection. However, it is possible to prove that, if $A$ is a hoop, then the above map defines an isomorphism of L-algebras between $A$ and the subset
\[
Y'=\{(x,b)\in X \times B \mid s(b)\ra (s(b)\cd x)=x\}
\]
of $X \times B$
\end{remark}

\begin{proposition}
Let
\[
\begin{tikzcd}
X & A & B
\arrow["k", from=1-1, to=1-2]
\arrow["p", shift left, from=1-2, to=1-3]
\arrow["s", shift left, from=1-3, to=1-2]
\end{tikzcd}
\] 
be a split extensions with strong section in the variety of hoops. Then the operation
\[
(x, b)\ra (y, b')=((s(b\ra b')\ra x)\ra (s(b'\ra b)\ra y), \ b\ra b')
\]
defined on the set
\[
Y'=\{(x,b)\in X \times B \mid s(b)\ra (s(b)\cd x)=x\}
\]
coincides with the implication $\ra$ of \Cref{thss}.
\end{proposition}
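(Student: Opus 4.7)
The second coordinates of both operations are manifestly equal to $b\ra b'$, so the task reduces to checking that the first coordinates agree, that is, to establish the identity
\[
s(b'\ra b)\ra (x\ra y) \;=\; (s(b\ra b')\ra x)\ra (s(b'\ra b)\ra y)
\]
for every $(x,b),(y,b')\in Y'$. My plan is to transform both sides into the common expression $(x\cdot s(b\ra b'))\ra w_{0}$, where $w_{0}:=s(b)\ra(s(b')\cdot y)$, using three basic tools: the hoop swap identity $a\ra(b\ra c)=b\ra(a\ra c)$, divisibility $p\cdot(p\ra q)=q\cdot(q\ra p)$, and the strong section property (which, for $x\in X$ and any $c\in B$, forces $x\ra s(c)=s(c)$).

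First I would rewrite the element $u\ra y$, where $u:=s(b'\ra b)$. Since $(y,b')\in Y'$ gives $y=s(b')\ra(s(b')\cdot y)$, I can collapse $s(b'\ra b)\cdot s(b')=s((b'\ra b)\cdot b')=s(b\wedge b')=s(b\ra b')\cdot s(b)$ using divisibility in $B$ together with the fact that $s$ is a hoop morphism, and deduce
\[
u\ra y \;=\; (s(b\ra b')\cdot s(b))\ra (s(b')\cdot y)\;=\; s(b\ra b')\ra w_{0},
\]
with $v:=s(b\ra b')$.

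Next, I would simplify the left-hand side: by the swap identity, $u\ra(x\ra y)=x\ra(u\ra y)=x\ra(v\ra w_{0})=(x\cdot v)\ra w_{0}$. For the right-hand side, I would again use the swap identity to get $(v\ra x)\ra(v\ra w_{0})=\bigl((v\ra x)\cdot v\bigr)\ra w_{0}$, and then apply divisibility in $A$ to rewrite $(v\ra x)\cdot v=v\cdot(v\ra x)=x\cdot(x\ra v)$, at which point the strong section hypothesis $x\ra v=v$ (valid because $x\in X$ and $v\in s(B)$) collapses this to $x\cdot v$. Both sides thus become $(x\cdot v)\ra w_{0}$, completing the proof.

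The only delicate point is the last equality, where one must be sure that strong section really gives $x\ra s(b\ra b')=s(b\ra b')$; this was recorded in the remark following the definition of strong section, and is the precise reason the two semidirect product formulas (the categorical one from \Cref{thss} and Rump's one) collapse to the same operation on $Y'$.
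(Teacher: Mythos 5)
Your proof is correct and uses exactly the same ingredients as the paper's: substituting $y=s(b')\ra(s(b')\cdot y)$, the currying/swap identities, divisibility in $B$ (to get $s(b'\ra b)\cdot s(b')=s(b\ra b')\cdot s(b)$) and in $A$, and the strong-section identity $x\ra s(b\ra b')=s(b\ra b')$ for $x\in X$. The only difference is organizational — you reduce both sides to the common form $(x\cdot v)\ra w_0$, while the paper rewrites one expression into the other in a single chain — so this is essentially the paper's argument.
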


\begin{proof}
It is sufficient to check that
\begin{align*}
&(s(b\ra b')\ra x)\ra(s(b' \ra b)\ra y)\\
&=s(s(b\ra b')\ra x)\ra (s(b'\ra b)\ra (s(b')\ra (s(b')\cd y)))\\
&\overset{\text{H4}}{=}(s(b\ra b')\ra x)\ra ((s(b')\cd s(b' \ra b))\ra (s(b')\cd y))\\
&\overset{\text{H3}}{=}(s(b \ra b')\ra x)\ra ((s(b)\cd s(b \ra b'))\ra (s(b')\cd y))\\
&\overset{\text{H4}}{=}((s(b)\cd s(b\ra b') \cd s(b \ra b'))\ra x)\ra (s(b')\cd y)\\
&\overset{\text{H3}}{=}(s(b)\cd x\cd (x \ra s(b \ra b')))\ra (s(b')\cd y)\\
&\overset{\text{H3}}{=}(s(b') \cd s(b' \ra b)\cd x)\ra (s(b')\cd y)\\
&\overset{\text{H4}}{=}(s(b' \ra b) \cd x) \ra (s(b')\ra (s(b')\cd y))\\
&=(s(b' \ra b) \cd x) \ra y\\
&\overset{\text{H4}}{=}s(b' \ra b) \ra (x \ra y),
\end{align*}
for any $(x,b),(y,b') \in Y$.
\end{proof}

\begin{remark}
If the hoop is \emph{self-similar}, then by cancellativity 
\[
s(b)\ra (s(b)\cd x)=x
\]
for any $(x,b) \in X \times B$. Hence, the domain of $X \rtimes B$ is $X \times B$ and $f_{b}=\id_X$ for any $b \in B$.
\end{remark}

\section{Conclusions}\label{sec_concl}

In this manuscript, we described split extensions with strong section in varieties of hoops in terms of \emph{strong external actions}. We proved that for any hoop $X$, there is a natural isomorphism
\[
\spl(-,X) \cong \EAct(-,X)
\]
showing that any split extension with strong section of $B$ by $X$ in the variety $\Hp$ can be described by a pair of maps
\[
f,g \colon B \times X \rightarrow X
\]
satisfying a set of identities which depends on the axioms satisfied by the hoop.

A natural direction for future research is to extend this correspondence to \emph{all} split extensions, with the aim of developing a general notion of external action and of establishing natural isomorphisms
\[
\Act(-,X) \cong \SplExt(-,X) \cong \operatorname{EAct}(-,X),
\]
thus relating internal and external actions, in analogy with the case of derived actions in Orzech categories of interest~\cite{Orzech}.

\end{document}